\newtheorem{theorem}{Theorem}[section]  
\newtheorem{proposition}[theorem]{Proposition}
\newtheorem{lemma}[theorem]{Lemma}
\newtheorem{remark}[theorem]{Remark}
\newcommand{\beqa}{\begin{eqnarray}}
\newcommand{\eeqa}{\end{eqnarray}}
\newcommand{\beqas}{\begin{eqnarray*}}
\newcommand{\eeqas}{\end{eqnarray*}}
\newcommand{\beq}{\begin{equation}}
\newcommand{\eeq}{\end{equation}}
\newcommand{\beqs}{\begin{equation*}}
\newcommand{\eeqs}{\end{equation*}}
\newcommand{\R}{\mathbb{R}}
\newcommand{\ones}{{\mathds{1}}}
\newcommand{\bfone}{\mathbf{1}} 
\newcommand{\ddd}{\delta}
\newcommand{\norm}[1]{\left|\left|#1\right|\right|}
\newcommand{\tr}[1]{\text{tr}\left(#1\right)}
\newcommand{\trace}{\mbox{\rm trace}}
\newcommand{\hub}{\overline{h}}
\newcommand{\qlb}{\underline{q}}
\newcommand{\zub}{\overline{z}}
\newcommand{\yub}{\overline{y}}
\newcommand{\cdk}{B_{\text{CD}\mbox{-}k}}
\newcommand{\cd}{B_{\text{CD}}}
\newcommand{\ccd}{B_{\text{CCD}}}
\newcommand{\ccdpi}{B_{\text{CCD}\mbox{-}\pi}}
\newcommand{\ccdpiell}{B_{\text{CCD}\mbox{-}\pi_\ell}}
\newcommand{\rcd}{B_{\text{RCD}\mbox{-}k}}
\newcommand{\rcdE}{B_{\text{RCD}}}
\newcommand{\rpcd}{B_{\text{RPCD}\mbox{-}\ell}}
\newcommand{\rpcdE}{B_{\text{RPCD}}}
\newcommand{\xc}{x_{\text{CCD}}}
\newcommand{\xcpi}{x_{\text{CCD}\mbox{-}\pi}}
\newcommand{\xr}{x_{\text{RCD}}}
\newcommand{\xp}{x_{\text{RPCD}}}
\newcommand{\xs}{{x^*}}
\newcommand{\bma}{\begin{math}}
\newcommand{\ema}{\end{math}}
\newcommand{\E}{\mathbb E}
\newcommand{\bigO}{{\cal O}}
\newcommand{\bigOt}{\widetilde{\cal O}}
\newcommand{\Ly}{{\cal I}}
\begin{document}

\title{Randomness and Permutations in Coordinate Descent Methods}



\author{Mert G\"urb\"uzbalaban\thanks{Department of Management Science and Information Systems, Rutgers University, 100 Rockafellar Road, Piscataway, NJ 08854. \texttt{mg1366@rutgers.edu}.}, 
Asuman Ozdaglar\thanks{Department of Electrical Engineering and Computer Science, Massachusetts Institute of Technology, 77 Massachusetts Avenue, Cambridge, Massachusetts 02139. \texttt{asuman@mit.edu}.}, \\
Nuri Denizcan Vanli\thanks{Department of Electrical Engineering and Computer Science, Massachusetts Institute of Technology, 77 Massachusetts Avenue, Cambridge, Massachusetts 02139. \texttt{denizcan@mit.edu}.}\, and 
Stephen J. Wright\thanks{Department of Computer Sciences and Wisconsin Institute for Discovery, University of Wisconsin - Madison, 1210 West Dayton Street, Madison, WI 53706. \texttt{swright@cs.wisc.edu}.}}

%
%


\maketitle

\begin{abstract}
We consider coordinate descent (CD) methods with exact line search on convex quadratic problems. Our main focus is to study the performance of the CD method that use random permutations in each epoch and compare it to the performance of the CD methods that use deterministic orders and random sampling with replacement. We focus on a class of convex quadratic problems with a diagonally dominant Hessian matrix, for which we show that using random permutations instead of random with-replacement sampling improves the performance of the CD method in the worst-case. Furthermore, we prove that as the Hessian matrix becomes more diagonally dominant, the performance improvement attained by using random permutations increases. We also show that for this problem class, using any fixed deterministic order yields a superior performance than using random permutations. We present detailed theoretical analyses with respect to three different convergence criteria that are used in the literature and support our theoretical results with numerical experiments.
\end{abstract}

\section{Introduction}\label{sec:main}
We consider coordinate descent (CD) methods for solving unconstrained optimization problems of the form 
\begin{equation} \label{eq:f}
	\min_{x \in \R^n} \, f(x),
\end{equation}
where $f:\R^n \to \R$ is smooth and convex. CD methods have a long history in optimization \cite{luo1992convergence,bertsekas1989parallel,ortega2000iterative} and have been used in many applications \cite{PassCode15,NesterovCD17,richtarik2016parallel,qin2013efficient,scutari14}. They have seen a resurgence of recent interest because of their scalability and desirable empirical performance in machine learning and large-scale data analysis \cite{Bertsekas15Book,Shi16Survey,Wright2015}.

CD methods are iterative algorithms that perform (approximate) global minimizations with respect to a single coordinate (or several coordinates in the case of block CD) at each iteration. Specifically, at iteration $k$, an index $i_k \in \{1,2,\dots,n\}$ is chosen and the decision variable is updated to approximately minimize the objective function in the $i_k$-th coordinate direction (or at least to produce a significant decrease in the objective) \cite{Bertsekas99nonlinear, Bertsekas15Book}. The steps of this method are summarized in Algorithm~\ref{alg:cd}, where $e_i = [0,\dots,0,1,0,\dots,0]^T$ is the $i$-th standard basis vector (with the $i$-th entry equal to one). At each iteration $k$, $i_k$-th coordinate of $x$ is selected and a step is taken along the negative gradient direction in this coordinate. The counter $k=\ell n+j$ keeps track of the total number of iterations consisting of outer iterations indexed by $\ell$ and inner iterations indexed by the counter $j$. Each outer iteration is called a ``cycle" or an ``epoch'' of the algorithm.

\begin{algorithm}
\begin{algorithmic}
\STATE Choose initial point $x^0 \in \R^n$
\FOR{$\ell=0,1,2,\dotsc$}
\FOR{$j=0,1,2,\dotsc,n-1$}
\STATE Set $k=\ell  n+j$
\STATE Choose index $i_k=i(\ell,j) \in \{1,2,\dotsc,n\}$
\STATE Choose stepsize $\alpha_k>0$
\STATE $x^{k+1} \leftarrow x^k - \alpha_k [\nabla f(x^k)]_{i_k} e_{i_k}$, where $[\nabla f(x^k)]_{i_k} = e_{i_k}^T \nabla f(x^k)$
\ENDFOR
\ENDFOR
\end{algorithmic}
\caption{Coordinate Descent (CD) \label{alg:cd}}
\end{algorithm}

CD methods use various schemes, both deterministic and stochastic, for choosing the coordinate $i_k$ to be updated at iteration $k$. Prominent schemes include the following.
\begin{itemize}
\item Cyclic CD (CCD): The index $i(\ell,j)$ is chosen in a cyclic fashion over the elements in the set $\{1,2,\dots,n\}$ satisfying $i(\ell,j)=j+1$.
\item Cyclic CD with a given order $\pi$ (CCD-$\pi$): A permutation $\pi$ of the set $\{1,2,\dots,n\}$ is selected. Then, the index $i(\ell,j)$ is chosen as the $(j+1)$-th element of $\pi$ for every epoch $\ell$. (CCD corresponds to the special case of $\pi=(1,2,\dots,n)$.)
\item Randomized CD (RCD): The index $i(\ell,j)$ is chosen randomly with replacement from the set $\{1,2,\dotsc,n\}$ with uniform probabilities (each index has the same probability of being chosen). This method is also known as the \emph{stochastic CD} method.
\item Random Permutations Cyclic CD (RPCD): At the beginning of each epoch $\ell$, a permutation of $\{1,2,\dotsc,n\}$ is chosen, denoted by $\pi_{\ell}$, uniformly at random over all permutations. Then, the index $i(\ell,j)$ is chosen as the $(j+1)$-th element of $\pi_{\ell}$. Each permutation $\pi_{\ell}$ is independent of the permutations used at all previous and later epochs. This approach amounts to sampling indices from the set $\{1,2,\dotsc,n\}$ without replacement for each epoch.
\end{itemize}
While our focus in this paper will be on CD methods with the aforementioned selection rules, we note that several other variants of CD methods have been studied in the literature, including the Gauss-Southwell rule \cite{nutini2015coordinate}, in which $i_k$ is selected in a greedy fashion to maximize $[\nabla f(x^k)]_i$, and versions of RCD \cite{nesterov2012efficiency}, in which $i_k$ is selected from a non-uniform distribution that may depend on the component-wise Lipschitz constants of $f$.

We are interested in the relative convergence behavior of these different variants of CD. While there have been some recent works that study and compare performances of CCD and RCD (for example, \cite{nesterov2012efficiency,WangLinCCD,TewariSIAM,beck2013convergence,sun2015improved,sun2016worst,ccd_vs_rcd}); with the exception of a few recent papers (which focus on special quadratic problems, see \cite{wrightRPCD15,wrightRPCD17}), there is limited understanding of the effects of random permutations in CD methods.

In this paper, we study convergence rate properties of RPCD for a special class of quadratic optimization problems with a diagonally dominant Hessian matrix, and compare its performance to that of RCD and CCD. Interest in RPCD is motivated by both empirical observations and practical implementation: In many machine learning applications, RPCD is observed numerically to outperform its with-replacement sampling counterpart RCD \cite{Needell2014a,Recht2012}. Moreover, without-replacement sampling-based algorithms (such as RPCD and random reshuffling \cite{mert_RandomReshuffling,Bertsekas15}) are often easier to implement efficiently than their with-replacement counterparts (such as RCD and stochastic gradient descent) \cite{Recht2012,wrightRPCD15} as it requires sequential data access, in contrast to the random data access required by with-replacement sampling (see e.g. \cite{shamir2016without,Bottou2012}).

We start by surveying briefly the existing results on the effects of random permutations for CD methods \cite{sun2016worst,wrightRPCD15,wrightRPCD17,oswald2017random}. Among these, Oswald and Zhou~\cite{oswald2017random} studies the effects of random permutations on the convergence rate of the successive over-relaxation (SOR) method (that is used to solve linear systems) and presents a convergence rate on the expected function value of the iterates generated by the SOR method. The CD method, when applied to quadratic minimization problems, is equivalent to the SOR method (applied to the linear system that represents the first-order optimality condition of the quadratic problem) when the relaxation parameter is chosen as $\omega=1$. Therefore, the convergence rate results in \cite{oswald2017random} readily extend for RPCD, when applied to quadratic problems. Sun and Ye \cite{sun2016worst} construct a quadratic problem, for which CCD requires $\bigO(n^2)$ times more iterations compared to RCD in order to achieve an $\epsilon$-optimal solution (that is, a point $x^k$ that satisfies $\E f(x^k) - f(x^*) \leq \epsilon$). For this problem, they also show that the distance of the iterates (to the optimal solution) for CCD decays $\bigO(n^2)$ times slower than the distance of the expected iterates for RPCD and RCD. Lee and Wright \cite{wrightRPCD15} consider the same problem and present that the expected function values of RPCD and RCD decay with similar rates, while the asymptotic convergence rate of RPCD is shown to be slightly better than for RCD. In a following paper \cite{wrightRPCD17}, the results in \cite{wrightRPCD15} are generalized to a larger class of quadratic problems through a more elaborate analysis. 

Our main results provide convergence rate comparisons with respect to various criteria between RPCD, RCD, and CCD for a class of strongly convex quadratic optimization problems with a diagonally dominant Hessian matrix. In particular, we first provide an exact worst-case convergence rate comparison between RPCD, RCD, and CCD in terms of the distance of the expected iterates to the optimal solution, as a function of a parameter that represents the extent of diagonal dominance of the Hessian matrix. Our results show that, on this problem, CCD is always faster than RPCD, which in turn is always faster than RCD. Furthermore, we show that the relative convergence rate of RPCD to RCD goes to infinity as the Hessian matrix becomes more diagonally dominant. On the other extreme, as the Hessian matrix becomes less diagonally dominant, the ratio of convergence rates converges to a value in $[3/2, \, e-1)$, with the upper bound $e-1$ achieved in the limit as $n \to \infty$. Our second set of results compares the convergence rates of RPCD and RCD with respect to two other criteria that are widely used in the literature: the expected distance of the iterates to the solution and the expected function values of the iterates. For these criteria, we show that RPCD is faster than RCD in terms of the tightest upper bounds we obtain, and the amount of improvement increases as the matrices become more diagonally dominant.

The organization of the paper is as follows. In Section~\ref{sec:prelim}, we discuss the CCD, RCD, and RPCD algorithms in more detail and describe the three criteria that are used for analyzing convergence throughout the paper. In Section~\ref{sec:prior}, we survey known results on the convergence rate of RPCD. We analyze the convergence rates of CCD, RCD, and RPCD with respect to the first convergence criterion in Section~\ref{ssec:lyapunov1} and the behavior of RCD and RPCD with respect to the second and third convergence criteria in Section~\ref{ssec:lyapunov2and3}. We validate our theoretical results via numerical experiments in Section~\ref{sec:experiments} and present conclusions in Section~\ref{sec:conclusion}.

\section{Preliminaries}\label{sec:prelim}
To study performance of different CD methods, we focus on the special case of problem \eqref{eq:f} when $f$ is a strongly convex quadratic function:\footnote{The results can be generalized for quadratic functions of the form $f(x) = \frac12 x^TAx - b^Tx$; however, for simplicity and compatibility with the earlier results in the literature, we consider the case $b=0$.}
\begin{equation} 
f(x) = \frac12 x^TAx, \label{eq:obj1}
\end{equation}
where $A$ is a positive definite matrix. We denote its extreme eigenvalues by
\beq \label{def-muL}
\mu := \lambda_{\min}(A) >0, \quad
L := \lambda_{\max}(A),
\eeq
and note that $\mu$ is the modulus of convexity for $f$, while $L$ is the Lipschitz constant for $\nabla f$. The problem \eqref{eq:f} has a unique solution $x^*= 0$ with optimal value $f(\xs)=0$.

In the remainder of this section, we derive explicit formulas for the iterates of different variants of CD applied to \eqref{eq:f} (in terms of matrix operators representing each epoch)  and then introduce different convergence criteria for these variants. We show how asymptotic convergence rates can be characterized in terms of the spectral properties of $A$ and the matrix operators for each epoch.

\subsection{CD Methods}
In this section, we describe the variants of the CD method (in particular, CCD,  CCD-$\pi$, RCD, and RPCD) when applied to the quadratic problem in \eqref{eq:obj1}. The CD method (cf. Algorithm \ref{alg:cd}) with exact line search has the following update rule at each iteration
\begin{equation} \label{eq:cd-exact}
	x^{k+1} = x^k - \frac{1}{A_{i_k i_k}} (Ax^k)_{i_k} e_{i_k},
\end{equation}
where the update coordinate $i_k$ is determined according to one of the schemes mentioned above.

For the CCD algorithm, each coordinate is processed in a round-robin fashion using the standard cyclic order $(1,2,\dots,n)$. Denoting by $D$ the diagonal part of $A$ and by $-N$ the strictly lower triangular part of $A$, that is,
\begin{equation}
	A = D - N - N^T, \nonumber
\end{equation}
the evolution of the iterates over an epoch (of $n$ consecutive iterations) can be written as
\begin{equation}
	\xc^{(\ell+1)n} = \ccd \, \xc^{\ell n}, \quad \mbox{with} \quad \ccd = (D-N)^{-1}N^T, \label{eq:ccd}
\end{equation}
where $\ell$ denotes the epoch counter. Note that the update rule in \eqref{eq:ccd} is equivalent to one iteration of the Gauss-Seidel method applied to the first-order optimality condition of \eqref{eq:f}, which is the linear system $Ax=0$~\cite{Wright2015}.

For the CCD-$\pi$ algorithm, we let $P_\pi$ denote the permutation matrix corresponding to order $\pi$ and split the permuted Hessian matrix as follows:
\begin{equation} \label{eq:split}
    A_\pi = P_\pi^T A P_\pi = D_\pi - N_\pi  - N_\pi^T,
\end{equation}
where $-N_\pi$ is a strictly lower triangular matrix and $D_\pi$ is a diagonal matrix. Then, similar to \eqref{eq:ccd}, we have
\begin{equation}
	\xcpi^{(\ell+1)n} = \ccdpi \, \xcpi^{\ell n}, \quad \mbox{with} \quad \ccdpi = (D_\pi-N_\pi)^{-1}N_\pi^T. \label{eq:ccd-pi}
\end{equation}
Note that $\ccd$ and $\ccdpi$ are not symmetric matrices as the first column of both matrices are zero, whereas the first row contains nonzero entries.

For the RCD algorithm, the indices $i_k$ are chosen independently at random at each iteration $k$. Denoting by $\xr^k$ the $k$-th iterate generated by RCD, the update rule for RCD over a single iteration can be written as
\begin{equation}
	\xr^{k+1} = \rcd \, \xr^k, \quad \mbox{with} \quad \rcd = I - \frac{1}{A_{i_k i_k}} e_{i_k} e_{i_k}^T A. \label{eq:rcd-definition}
\end{equation}
The expectation of $\rcd$ with respect to the random variable $i_k$ is denoted as follows:
\begin{equation}\label{eq:Brcd}
\rcdE = \E_k \rcd,
\end{equation}
where we note that $\rcdE$ is a symmetric matrix, by symmetry of $A$ and uniform distribution of $i_k$.

For the RPCD algorithm, each coordinate is processed exactly once in each epoch according to a uniformly and independently chosen order. Recalling that $\pi_\ell$ denotes the permutation of coordinates used in epoch $\ell$ and using the iteration matrix corresponding to CCD-$\pi_\ell$ (see \eqref{eq:ccd-pi}), epoch $\ell$ of RPCD can be written as
\begin{equation}
	\xp^{(\ell+1)n} = \rpcd \, \xp^{\ell n}, \quad \mbox{with} \quad \rpcd = P_{\pi_\ell}  \ccdpiell P_{\pi_\ell}^T. \label{eq:rpcd}
\end{equation}
We introduce the following notation for the expected value of $\rpcd$ with respect to permutation $\pi_\ell$:
\begin{equation} \label{eq:Brpcd}
\rpcdE = \E_{\ell} \rpcd,
\end{equation}
where we note that $\rpcdE$ is a symmetric matrix since $\pi_\ell$ is chosen uniformly at random over all permutations (see Lemma \ref{lem:wright_lemma}).

\subsection{Convergence Rate Criteria}\label{ssec:rate_criteria}
We next discuss how to measure and compare the convergence rates of different variants of CD. Three different improvement sequences have been used to measure the performance of CD methods in the literature:
\begin{align*}
	& (i)   & \Ly_1(x_\text{CD}^k) & = \norm{\E x_\text{CD}^k-\xs}, & \text{(Distance of expected iterates)} \\
    & (ii)  & \Ly_2(x_\text{CD}^k) & = \E\norm{x_\text{CD}^k-\xs}^2, & \text{(Expected distance of iterates)} \\
    & (iii) & \Ly_3(x_\text{CD}^k) & = \E f(x_\text{CD}^k) - f(\xs). & \text{(Expected function value)} 
\end{align*}
(see e.g. \cite{sun2015improved,sun2016worst,ccd_vs_rcd,richtarik2016parallel,Wright2015,nesterov2012efficiency,beck2013convergence}). While these three measures can be related to each other (Jensen's inequality yields $\Ly_1^2 \leq \Ly_2$ and strong convexity enables lower and upper bounding $\Ly_3$ between constant positive multiples of $\Ly_2$), we will provide different analyses for each of the measures to obtain the tightest estimates.

In the above definitions, expectations can be removed for deterministic algorithms such as CCD. By Jensen's inequality, we have that  $\Ly_1^2(x_\text{CD}^k) \le \Ly_2(x_\text{CD}^k)$ for all $k$. For a strongly convex function $f$, $\Ly_3$ can be lower and upper bounded between constant positive multiples of $\Ly_2$.

To study convergence rate of CCD, RCD, and RPCD with respect to improvement sequence $\Ly_1$, we use the operators derived in the previous section that represent one iterate or one epoch. For CCD and RPCD, we have from \eqref{eq:ccd} and \eqref{eq:rpcd} together with \eqref{eq:Brpcd} that 
\begin{equation*}
	\E_\ell x_\text{CD}^{(\ell+1) n} = \cd \, x_\text{CD}^{\ell n},
\end{equation*}
where $\E_\ell$ denotes the expectation with respect to the random variables in epoch $\ell$ given $x_\text{CD}^{\ell n}$. (We have $\cd=\ccd$ for CCD and $\cd=\rpcdE$ for RPCD.) Note that the random variables in each epoch are independent and identically distributed across different epochs for RCD and RPCD. Therefore, by using the law of iterated expectations, we obtain
\begin{equation*}
	\E x_\text{CD}^{(\ell+1) n} = \cd^\ell \, x^0,
\end{equation*}
where $\E$ here denotes the expectation with respect to {\em all} random variables arising in the algorithm.
Hence, the {\em worst-case convergence rate} with respect to $\Ly_1$ can be expressed as
\begin{equation}
	\sup_{x^0\in\R^n} \left( \frac{\norm{\E x_\text{CD}^{\ell n}}}{\norm{x^0}} \right)^{1/\ell} = \sup_{x^0\in\R^n} \left( \frac{\norm{\cd^\ell \, x^0}}{\norm{x^0}} \right)^{1/\ell} = \norm{\cd^\ell}^{1/\ell}. \label{eq:rmk1}
\end{equation}
When $\cd$ is a symmetric matrix (as in RPCD), we have $\norm{\cd^\ell}^{1/\ell} = \rho(\cd)$. Hence, \eqref{eq:rmk1} yields a {\em per-epoch} worst-case convergence rate of $\rho(\rpcdE)$ for RPCD. When $\cd$ is asymmetric (which is the case for CCD), we have by Gelfand's formula $\lim_{\ell\to\infty}\norm{\cd^\ell}^{1/\ell} = \rho(\cd)$. Thus, $\rho(\ccd)$ represents an {\em asymptotic} worst-case convergence rate measure for CCD.

For RCD, a similar derivation involving a single iteration (rather than one epoch) yields from \eqref{eq:rcd-definition} and \eqref{eq:Brcd} that 
\begin{equation*}
\E_k x_\text{RCD}^{k+1} = \rcdE \, x_\text{CCD}^k.
\end{equation*}
Similar reasoning to the above yields a {\em per-iteration} worst-case convergence rate of $\rho(\rcdE)$, or equivalently a per-epoch rate of $\rho(\rcdE)^n$, for RCD. (Note that, because $\rcdE$ is symmetric, we have $\rho(\rcdE) = \norm{\rcdE}$.)

In our analysis of convergence rate of RCD with respect to improvement sequence $\Ly_2$, it follows from \eqref{eq:rcd-definition} that
\begin{align*}
	\E \norm{x_\text{RCD}^{k+1}}^2 & = (x_\text{RCD}^k)^T \E \left[ (\rcd)^T \rcd \right] x_\text{RCD}^k \\
    	& \leq \norm{\E \left[ (\rcd)^T \rcd \right]} \norm{x_\text{RCD}^k}^2.
\end{align*}
For RPCD, we have similarly from \eqref{eq:rpcd} that 
\begin{align*}
	\E \norm{x_\text{RPCD}^{(\ell+1)n}}^2 & = (x_\text{RPCD}^{\ell n})^T \E \left[ (\rpcd)^T \rpcd \right] x_\text{RPCD}^{\ell n} \\
    	& \leq \norm{\E \left[ (\rpcd)^T \rpcd \right]} \norm{x_\text{RPCD}^{\ell n}}^2.
\end{align*}
The matrices $\E \left[ (\rcd)^T \rcd \right]$ and $\E \left[ (\rpcd)^T \rpcd \right]$ are both symmetric. Convergence rates be obtained from $\rho \left( \E \left[ (\rcd)^T \rcd \right] \right)$ and  $\rho \left( \E \left[ (\rpcd)^T \rpcd \right] \right)$ (or equivalently from the norms of these matrices), the first being a per-iteration convergence rate for RCD under criterion $\Ly_2$, and the second being a per-epoch rate for RPCD under the same criterion. Results along these lines appear in Section~\ref{ssec:lyapunov2and3}.

Finally, in our analysis of convergence rate of RCD with respect to $\Ly_3$, iteration \eqref{eq:rcd-definition} yields
\begin{align*}
	\E f(x_\text{RCD}^{k+1}) & = (x_\text{RCD}^k)^T \E_k \left[ (\rcd)^T A \rcd \right] x_\text{RCD}^k \\
    	& = (A^{1/2} x_\text{RCD}^k)^T \E_k \left[ A^{-1/2} (\rcd)^T A \rcd A^{-1/2} \right] A^{1/2} x_\text{RCD}^k \\
    	& \leq \norm{\E_k \left[ A^{-1/2} (\rcd)^T A \rcd A^{-1/2} \right]} \norm{A^{1/2} x_\text{RCD}^k}^2.
\end{align*}
A similar analysis applied to the RPCD update formula \eqref{eq:rpcd} yields
\begin{equation*}
	\E f(x_\text{RPCD}^{(\ell+1)n})  \leq \norm{\E_{\ell} \left[ A^{-1/2} (\rpcd)^T A \rpcd A^{-1/2} \right]} \norm{A^{1/2} x_\text{RPCD}^{\ell n}}^2.
\end{equation*}
We will show that the matrices 
in these two bounds are symmetric. Thus, our convergence rate characterizations for RCD and RPCD with
respect to  $\Ly_3$ (see Section \ref{ssec:lyapunov2and3}) will involve the norms (equivalently, the spectral radii) of these two matrices.

\begin{remark}
Note that for improvement sequence $\Ly_1$, the asymptotic worst-case convergence rate of the algorithm can be simply computed as the spectral radius of the expected iteration matrix. Furthermore, this bound is tight in the sense that there can be no smaller contraction rate $c_1$, for which an inequality of the type $\Ly_1(x_\text{CD}^{\ell n}) \leq c_1^\ell \, \Ly_1(x^0)$ asymptotically holds for all $x^0\in\R^n$. Therefore, in Section \ref{ssec:lyapunov1}, we compare the worst-case convergence rates of CCD, RCD and RPCD with respect to $\Ly_1$ through a tight analysis (in Proposition~\ref{thm:ccd}). We analyze the ratio of the convergence rates of RCD and RPCD in Proposition~\ref{thm-monotonic-rpcd-speedup}. On the other hand, for improvement sequences $\Ly_2$ and $\Ly_3$, we consider per-iteration and per-epoch upper bounds that are not necessarily asymptotically tight. Using a similar argument to \eqref{eq:rmk1}, we can formulate the worst-case contraction factors for $\Ly_2$ and $\Ly_3$, but they would involve computation of powers of matrices (e.g., $\E \left[ (\cdk^\ell)^T \cdk^\ell \right]$ and $\E \left[ A^{-1/2} (\cdk^\ell)^T A \cdk^\ell A^{-1/2} \right]$), which does not admit a closed form characterization. Hence, in Section \ref{ssec:lyapunov2and3}, we compare the convergence rates of RCD and RPCD based on per-iteration and per-epoch improvement rates, as has been done previously in the literature \cite{sun2016worst,wrightRPCD15,wrightRPCD17}.
\end{remark}

\section{Prior work on CD methods with random permutations}\label{sec:prior}
In this section, we survey the known results on the performance of RPCD. There are several recent works that study the effects of random permutations in the convergence behavior of CD methods \cite{oswald2017random,wrightRPCD15,wrightRPCD17,sun2016worst}. To unify the randomization parameters (in RCD and RPCD) and the component-wise Lipschitz constants in different papers, we (without loss of generality) make the following assumption throughout the rest of the paper
\begin{equation}\label{eq:unit-diag}
	A_{ii} = 1, \quad \mbox{for all} \quad i\in\{1,2,\dots,n\}.
\end{equation}
This can always be satisfied by scaling the optimization variable, i.e., by setting $x = D^{-1/2}\tilde{x}$ in \eqref{eq:obj1} and minimizing over $\tilde{x} \in \R^n$ (see e.g. \cite{wrightRPCD17,ccd_vs_rcd}).

Recently, Oswald and Zhou \cite{oswald2017random} analyzed the effects of random permutations for the successive over-relaxation (SOR) method, which is equivalent to the CD method with exact line search for a particular choice of algorithm parameter. They consider quadratic problems whose Hessian matrix is positive semidefinite and present convergence guarantees for SOR iterations with random permutations, which implies the following guarantee on the performance of RPCD.

\begin{theorem}\label{theo-oswald}\cite[Theorem 4]{oswald2017random}
Let $f$ be a quadratic function of the form \eqref{eq:obj1}, where the Hessian matrix $A$ has unit diagonals. Then, for any solution $x^*$, the RPCD algorithm enjoys the following guarantee
\begin{equation}\label{eq:oswald_rpcd}
	\E f(\xp^{\ell n}) - f(x^*) \leq  \left (1 - \frac{\mu}{(1+L)^2}\right)^{\ell} \left(f(x^0) - f(x^*)\right).
\end{equation}
\end{theorem}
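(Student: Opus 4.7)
The plan is to prove the stronger \emph{deterministic} per-epoch contraction: for \emph{any} fixed permutation $\pi$, one full sweep of CCD-$\pi$ on $f(x) = \tfrac{1}{2} x^T A x$ satisfies
\begin{equation*}
f(x^+_\pi) - f(\xs) \;\le\; \left(1 - \frac{\mu}{(1+L)^2}\right)\bigl(f(x) - f(\xs)\bigr).
\end{equation*}
Since this estimate does not depend on $\pi$, the theorem follows by taking conditional expectation across the independent permutations $\pi_0,\dots,\pi_{\ell-1}$, invoking the tower property, and iterating $\ell$ times. Thus the randomness in the RPCD selection plays no essential role: the statement is really a deterministic Gauss--Seidel (equivalently SOR with $\omega=1$) convergence rate on an SPD quadratic with unit diagonals, consistent with the remark preceding the theorem.

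For the one-sweep bound, fix $\pi$ and decompose $A = I + L_\pi + L_\pi^T$, where $L_\pi$ is the strictly lower triangular part of $A$ in order $\pi$, and set $T_\pi := I + L_\pi$. One exact-line-search sweep is then the Gauss--Seidel step $x^+_\pi = x - T_\pi^{-1} A x$. I would first establish the clean energy identity
\begin{equation*}
f(x) - f(x^+_\pi) \;=\; \tfrac{1}{2}\,\|x^+_\pi - x\|^2 \;=\; \tfrac{1}{2}\,\|T_\pi^{-1} A x\|^2,
\end{equation*}
which follows by telescoping the per-coordinate decreases: since $A_{ii}=1$ (by assumption \eqref{eq:unit-diag}), the inner update of coordinate $i$ reduces $f$ by exactly $\tfrac{1}{2}(\Delta x_i)^2$, so summing over the $n$ inner steps gives $\tfrac{1}{2}\|x^+_\pi - x\|^2$, and the Gauss--Seidel form supplies the second equality.

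Next, I would lower bound the right-hand side by $\tfrac{\mu}{(1+L)^2}(f(x)-f(\xs)) = \tfrac{\mu}{2(1+L)^2}\, x^T A x$. Substituting $z = A^{1/2} x$ turns this into the operator inequality $\|A^{-1/2} T_\pi\|^2 \le (1+L)^2/\mu$. Writing $\|A^{-1/2} T_\pi\|^2 = \|T_\pi^T A^{-1} T_\pi\| \le \mu^{-1}\|T_\pi\|^2$ via $A^{-1} \preceq \mu^{-1} I$, the task reduces further to showing $\|T_\pi\| \le 1 + L$. By the triangle inequality $\|T_\pi\| = \|I + L_\pi\| \le 1 + \|L_\pi\|$, so it suffices to prove $\|L_\pi\| \le L = \|A\|$ for the strictly triangular part of $A$.

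The main obstacle is precisely this bound on the strictly triangular part: for a general symmetric matrix, triangular truncation can inflate the operator norm (by a logarithmic-in-$n$ factor in the worst case), so the argument must exploit both the positive-definiteness of $A$ and the unit-diagonal normalization. I would attack it through the identity $T_\pi T_\pi^T = A + L_\pi L_\pi^T$ together with the accretive structure $T_\pi + T_\pi^T = I + A \succeq (1+\mu) I$, the latter immediately giving $\sigma_{\min}(T_\pi) \ge (1+\mu)/2$; chaining these with the energy identity can potentially bypass any free-standing bound on $\|L_\pi\|$. If this direct path yields only a slightly weaker constant, the exact rate $\mu/(1+L)^2$ would be recovered by matching the sharper eigenvalue bound on $T_\pi^T A^{-1} T_\pi$ underlying the Oswald--Zhou SOR analysis. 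Once the per-epoch contraction is in place, a one-line induction over $\ell$ epochs finishes the proof.
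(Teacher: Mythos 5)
Your energy identity $f(x)-f(x^+_\pi)=\tfrac12\|x^+_\pi-x\|^2=\tfrac12\|T_\pi^{-1}Ax\|^2$ is correct, and so is the formal reduction of the one-sweep bound to $\|L_\pi\|\le L$. The fatal problem is that this last inequality, and with it your claimed deterministic per-permutation contraction, is \emph{false} in general: triangular truncation is not bounded in the operator norm, and the $\Theta(\log n)$ inflation you mention cannot be removed by positive definiteness or the unit-diagonal normalization. Concretely, let $K$ be the $n\times n$ antisymmetric kernel $K_{jk}=1/(j-k)$ (so $\|K\|\le\pi$ by Hilbert's inequality, while its strictly lower part applied to the all-ones vector has norm $\asymp\log n$), form the symmetric zero-diagonal matrix $B=\bigl(\begin{smallmatrix}0&K^T\\K&0\end{smallmatrix}\bigr)$, interleave the two index blocks, and set $A=I+\tfrac{1}{2\pi}B$. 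Then $A$ is positive definite with unit diagonal, $\mu\ge 1/2$ and $L\le 3/2$, yet the strictly lower triangular part of $A$ in that ordering contains the truncated Hilbert kernel as a sub-block and has norm $\gtrsim\tfrac{\log n}{2\pi}\to\infty$. Since the \emph{exact} worst-case one-sweep rate for a fixed order is $1-\bigl(1+\|A^{-1/2}L_\pi\|^2\bigr)^{-1}$ (this follows from your own identity $T_\pi T_\pi^T=A+L_\pi L_\pi^T$ after conjugating by $A^{-1/2}$), this example violates the per-sweep rate $1-\mu/(1+L)^2$ for that permutation once $n$ is large. Your proposed bypasses do not rescue this: the identity $T_\pi T_\pi^T=A+L_\pi L_\pi^T$ leads straight back to $\|A^{-1/2}L_\pi\|$, and the accretivity bound $\sigma_{\min}(T_\pi)\ge(1+\mu)/2$ controls $\|T_\pi^{-1}\|$ from above, which is the wrong direction when you need to bound $\|T_\pi^{-1}Ax\|$ from \emph{below}. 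The closing appeal to ``the sharper eigenvalue bound underlying the Oswald--Zhou SOR analysis'' is circular.

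The deeper issue is the framing that ``the randomness in the RPCD selection plays no essential role.'' The opposite is true, and it is the entire point of the cited theorem: averaging over permutations symmetrizes the triangular part (e.g.\ $\E_\pi[P_\pi L_\pi P_\pi^T]=\tfrac12(A-I)$, cf.\ Lemma~\ref{lem:wright_lemma}), and it is this averaging of the relevant quadratic forms that eliminates the $\log^2 n$ penalty that a fixed ordering can incur. Note also that the paper does not prove Theorem~\ref{theo-oswald}; it is quoted from \cite{oswald2017random}, so there is no internal proof to compare against --- but any correct proof must place the expectation over $\pi$ \emph{inside} the estimate rather than proving a uniform deterministic bound first. (Your strategy would be fine for the special permutation-invariant matrix \eqref{def-An} studied later in the paper, where all orderings are equivalent, but not for the general unit-diagonal $A$ of this theorem.)
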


Theorem \ref{theo-oswald} provides a convergence rate guarantee on the performance of RPCD for general quadratic functions. Under the same assumptions in Theorem \ref{theo-oswald}, the best known upper bound on the performance of RCD is given by \cite[Theorem~5]{nesterov2012efficiency}:
\begin{equation}
	\E \left[ \frac{1}{2} \norm{\xr^k-\xs}^2 + f(\xr^k) - f(x^*) \right] \leq\left( 1- \frac{2 \mu}{n(1+\mu)} \right)^k \left( \frac{1}{2} \norm{x^0-\xs}^2 + f(x^0) - f(x^*) \right). \label{eq:rcd.rate.R}
\end{equation}
This shows that the the upper bound on the performance of RCD per-epoch is approximately $\left( 1-\frac{2\mu}{n(1+\mu)} \right)^n \approx 1-\frac{2\mu}{1+\mu}$, whereas it follows from \eqref{eq:oswald_rpcd} that the upper bound on the performance of RPCD can be as large as $1-\frac{\mu}{(1+n)^2}$ since $L \leq \tr{A} = n$. These bounds suggest that RPCD may require $\bigO(n^2)$ times more iterations than RCD to guarantee an $\epsilon$-optimal solution. However, empirical results show that RPCD often outperforms RCD in machine learning applications \cite{Recht2012,Bottou2009}. Furthermore, it has been conjectured that the expected performance of RPCD should be no worse than the expected performance of RCD \cite{Recht2012} (see also \cite{Ward16AMGM,Zhang14AMGM} for related work on this conjecture). This motivates to derive tight bounds for the convergence rate of RPCD and compare them with the known bounds on the convergence rate of RCD.

A similar phenomenon has been observed for CCD in comparison to RCD. In particular, the tightest known convergence rate results on the performance of CCD (see \cite{beck2013convergence,sun2016worst,sun2015improved}) suggest that CCD may require $\bigOt(n^2)$ times more iterations than RCD to guarantee an $\epsilon$-optimal solution. To understand this gap in the convergence rate bounds, Sun and Ye \cite{sun2016worst} focused on the quadratic problem in \eqref{eq:obj1} with the following permutation invariant\footnote{$A$ is a permutation invariant matrix if $PAP^T=A$, for any permutation matrix $P$.} Hessian matrix
\begin{equation} \label{eq:Ainvariant}
	A = \ddd I + (1-\ddd) \bfone \bfone^T, \quad \mbox{where} \quad \delta \in (0,n/(n-1)).
\end{equation}
In particular, the authors considered a worst-case initialization and the case when $\delta$ is close to $0$, for which $L=\bigO(n)$.\footnote{Since $A$ has two eigenvalues: $\delta+n(1-\delta)$ with multiplicity $1$ and $\delta$ with multiplicity $n-1$, the Lipschitz constant becomes $L=\delta+n(1-\delta)$, for $\delta\leq1$; and as $\delta\to0$, $L \to n$.} For this problem, they showed that CCD with the worst-case initialization indeed requires $\bigO(n^2)$ times more iterations than RCD to return an $\epsilon$-optimal solution. They also provided rate comparisons between RPCD and CCD without providing a comparison between RPCD and RCD, which is presented in the following theorem.

\begin{theorem}\label{prop: slower, iterates} \cite[Proposition~3.4]{sun2016worst}
Let $K_{\mathrm{CCD}}(\epsilon)$, $K_{\mathrm{RCD}}(\epsilon) $ and $K_{\mathrm{RPCD}}(\epsilon)$ be the minimum number of epochs for CCD, RCD and RPCD (respectively) to achieve (expected) relative error
$$
\frac{  \| \E(x_{\text{CD}}^k) - x^* \| }{ \|x^0 - x^* \| } \leq \epsilon,
$$
for initial point $x^0\in\R^n$ (for CCD, the expectation operator can be ignored). There exists a quadratic problem, whose Hessian matrix $A$ satisfies \eqref{eq:Ainvariant} for some $\delta$ around zero, such that
\begin{subequations}\label{ite, compare CD with others}
\begin{align}
      \frac{ K_{\mathrm{CCD} }(\epsilon ) }{ K_{\mathrm{RCD} }(\epsilon ) }    &   \geq \frac{n^2 }{ 2 \pi^2 } \approx \frac{n^2}{20}, \label{compare CD with RCD, coro} \\
      \frac{ K_{\mathrm{CCD} }(\epsilon ) }{ K_{\mathrm{RPCD} }(\epsilon ) }   &   \geq \frac{n (n+1) }{ 2 \pi^2 } \approx \frac{n(n+2)}{20}.
 \end{align}
  \end{subequations}
\end{theorem}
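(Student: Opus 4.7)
My plan is to follow Sun and Ye's construction: fix the permutation-invariant Hessian $A_\delta = \delta I + (1-\delta)\bfone\bfone^T$ with $\delta$ close to zero, choose a worst-case initial point $x^0$ (e.g. $x^0 = \bfone$), and compare the per-epoch rates of the three algorithms on this instance. I would begin with the two randomized variants, which simplify thanks to symmetry. Since $A_\delta$ is permutation invariant, both $\rcdE = I - A_\delta/n$ and $\rpcdE$ are symmetric and permutation invariant, so $\bfone$ is a common eigenvector. A direct calculation gives $\rcdE\, \bfone = \frac{(n-1)\delta}{n}\bfone$, so starting from $x^0 = \bfone$ the RCD expected iterates contract by a factor of $\Theta(\delta)$ per iteration for small $\delta$, yielding $K_{\mathrm{RCD}}(\epsilon) = O(\log(1/\epsilon)/(n\log(1/\delta)))$ epochs. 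An analogous calculation for $\rpcdE$, which requires averaging $\ccdpi$ over all $n!$ permutations but is tractable because $A_\delta$ has so few distinct orbit-types, yields $K_{\mathrm{RPCD}}(\epsilon)$ of the same order up to a factor $n/(n+1)$ reflecting the per-epoch without-replacement combinatorics.

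The bulk of the work concerns CCD. Using the closed-form identity $((I-N)^{-1})_{ij} = -(1-\delta)\delta^{i-j-1}$ for $i > j$, one can compute $\ccd^{\ell}\bfone$ component by component. A direct computation of a single epoch shows that CCD maps $\bfone$ to $\bfone - \mu_0 e_1 - \mu_0\delta e_2 - \cdots - \mu_0\delta^{n-1}e_n$ with $\mu_0 = n - (n-1)\delta$; in the limit $\delta \to 0$ this is essentially $\bfone - n e_1$, a vector of norm $\sim \sqrt{n}\,\|x^0\|$. Thus CCD transiently \emph{amplifies} the error by a factor $\sqrt{n}$ in a single epoch rather than shrinking it. Iterating this analysis, one shows that $\|\ccd^{\ell}x^0\|$ remains $\Omega(\|x^0\|)$ for at least $\Omega(n^2)$ iterations: although $\rho(\ccd) < 1$, the matrix is highly non-normal and its dominant pseudo-eigenmodes decay only like $1 - \Theta(1/n^2)$ per epoch. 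The precise $\pi^2$ constant in the theorem's bound comes from computing this leading pseudo-mode via the asymptotic $\sin^2(\pi/(2n)) \sim \pi^2/(4n^2)$, giving $K_{\mathrm{CCD}}(\epsilon) = \Omega(n^2 \log(1/\epsilon))$.

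Forming the ratios of these three estimates then produces the claimed $n^2/(2\pi^2)$ and $n(n+1)/(2\pi^2)$ scalings. The main obstacle is the CCD analysis: quantifying the non-normal transient sharply enough to match the $2\pi^2$ constant requires identifying the leading pseudo-eigenmode of $\ccd$ and tracking its exact decay, which is substantially more delicate than a spectral-radius computation and ultimately boils down to diagonalizing a structured tridiagonal-like operator whose eigenvalues involve $\sin(k\pi/(2n))$. A secondary subtlety is the combinatorial averaging needed to extract the dominant eigenvalue of $\rpcdE$, but the small number of distinct permutation orbits in $A_\delta$ makes this tractable in principle.
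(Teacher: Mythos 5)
First, a point of order: the paper does not prove this statement. It is quoted verbatim from Sun and Ye \cite[Proposition~3.4]{sun2016worst} in the prior-work section, so there is no in-paper proof to compare against; your sketch has to be measured against the cited source. At the level of strategy you have reconstructed that source correctly: fix the permutation-invariant instance \eqref{eq:Ainvariant} with $\delta\to 0$, reduce each method to its (expected) epoch operator, and let the $\pi^2$ emerge from trigonometric asymptotics in the eigenvalue analysis of the Gauss--Seidel matrix $\ccd$.

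However, two steps in your sketch would fail as written. (a) For RCD you extract the rate from the $\bfone$-direction, where $\rcdE\,\bfone=\tfrac{(n-1)\delta}{n}\bfone$. In the $\delta\to 0$ regime this is the \emph{fast} direction: the eigenvalue of $\rcdE$ on $\bfone^{\perp}$ is $1-\delta/n$, so the worst-case per-epoch rate is $(1-\delta/n)^n\approx 1-\delta$ and $K_{\mathrm{RCD}}(\epsilon)\sim\log(1/\epsilon)/\delta$, which is what must be paired with $\rho(\ccd)\approx 1-2\pi^2\delta/n^2$ to produce the ratio $n^2/(2\pi^2)$. Your estimate $K_{\mathrm{RCD}}=O(\log(1/\epsilon)/(n\log(1/\delta)))$ comes from initializing along $\bfone$; besides not yielding the stated constant, it bounds the ratio from the wrong side, since lower-bounding $K_{\mathrm{CCD}}/K_{\mathrm{RCD}}$ requires an \emph{upper} bound on $K_{\mathrm{RCD}}$, i.e.\ control of the slow direction $\bfone^{\perp}$, not a favorable one. (Note that $\bfone$ is the Perron eigenvector of $\rpcdE$ only in the paper's own regime $\alpha>0$ of Section~\ref{sec:4}; you appear to have transplanted that fact into the opposite regime, where it picks out the wrong eigenvalue for both RCD and RPCD.) (b) Your CCD analysis attributes the $\Omega(n^2)$ slowdown to a non-normal transient and ``pseudo-eigenmodes.'' The one-epoch amplification $\|\ccd\bfone\|\approx\sqrt{n}\,\|\bfone\|$ is real, but it is not what drives the bound: Sun and Ye's argument rests on the genuine spectral radius $\rho(\ccd)=1-\Theta(\delta/n^2)$, obtained by solving the characteristic equation of the Gauss--Seidel operator for this rank-one-plus-identity matrix, with the constant coming from $1-\cos(2\pi/n)=2\sin^2(\pi/n)\approx 2\pi^2/n^2$. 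A pseudospectral/transient argument would make the proof harder and would not by itself give a lower bound on the number of epochs needed for \emph{every} $\epsilon$, which is what $K_{\mathrm{CCD}}(\epsilon)$ requires. With (a) corrected and (b) replaced by the exact eigenvalue computation, the sketch becomes a faithful outline of the cited proof.
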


Theorem \ref{prop: slower, iterates} shows that the worst-case performance (in improvement sequence $\Ly_1$) of RPCD and RCD is $\bigO(n^2)$ times faster than that of CCD. In a follow-up work, Lee and Wright \cite{wrightRPCD15} considered the same problem as \cite{sun2016worst} (see \eqref{eq:Ainvariant}) for the small $\delta$ case and presented asymptotic and non-asymptotic analyses of RPCD with respect to improvement sequence $\Ly_3$, presented in the following theorem.

\begin{theorem}\label{thm-rpcd-ima}\cite[Theorem 3.3]{wrightRPCD15}
Consider the quadratic problem \eqref{eq:obj1} with the Hessian matrix $A$ given by \eqref{eq:Ainvariant}, where $\delta\in(0, 0.4)$ and $n \geq 10$. For any $x^0\in\R^n$, RPCD has the following non-asymptotic convergence guarantee 
\begin{equation}
	\E f(\xp^{\ell n}) - f(\xs) \leq (1-2\delta+4\delta^2)^\ell R_0,
\end{equation}
where $R_0$ is a constant depending on $x_0$ and $\delta$. Furthermore, RPCD iterates enjoy an asymptotic convergence rate of
\begin{equation}
	\lim_{\ell\to\infty} \left( \E f(\xp^{\ell n}) - f(\xs) \right)^{1/\ell}  = 1-2\ddd - \frac{2\ddd}{n} + 2\ddd^2 + \bigO\left(\frac{\ddd^2}{n}\right) + \bigO(\ddd^3).
\end{equation}
\end{theorem}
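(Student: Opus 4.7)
The plan is to exploit the full permutation-invariance of $A = \delta I + (1-\delta)\mathbf{1}\mathbf{1}^T$. Since $A_{ii}=1$ and every off-diagonal entry equals $1-\delta$, a single exact-line-search update at coordinate $i$ reduces to the two scalar rules
\begin{equation*}
x_i \mapsto (1-\delta)(x_i - s), \qquad s \mapsto \delta(s - x_i), \qquad s := \mathbf{1}^T x.
\end{equation*}
Iterating these rules through one epoch with permutation $\pi$ yields the closed form
\begin{equation*}
s_n = \delta^n s_0 - \sum_{j=1}^n \delta^{n-j+1} x_{\pi(j)}, \qquad x_{\pi(k)}^{\text{new}} = (1-\delta)(x_{\pi(k)} - s_{k-1}),
\end{equation*}
so every epoch-level quantity is a polynomial in the $x_{\pi(j)}$'s whose coefficients are geometric in $\delta$.

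\textbf{Paragraph 2 --- Two-dimensional reduction in expectation.} Decomposing $\R^n = \mathrm{span}(\mathbf{1}) \oplus \mathbf{1}^\perp$, on which $A$ acts as a scalar ($\mu_1 := \delta + (1-\delta) n$ on the first summand and $\mu_2 := \delta$ on the second), I would track the pair
\begin{equation*}
S_\ell := \E (u^T \xp^{\ell n})^2, \qquad T_\ell := \E \|V \xp^{\ell n}\|^2, \qquad u := \mathbf{1}/\sqrt n,\ V := I - uu^T,
\end{equation*}
so that $\E f(\xp^{\ell n}) = \tfrac{\mu_1}{2} S_\ell + \tfrac{\mu_2}{2} T_\ell$. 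Using the uniformity of $\pi_\ell$ to compute $\E_\pi x_{\pi(j)} = s/n$ and the bilinear moment $\E_\pi[x_{\pi(j)} x_{\pi(k)}]$ (with different values for $j = k$ and $j \ne k$, each expressible in $s^2$ and $\|x\|^2$), the closed-form update of Paragraph~1 turns $(S_\ell, T_\ell) \mapsto (S_{\ell+1}, T_{\ell+1})$ into a linear map with an explicit $2 \times 2$ transition matrix $M(\delta, n)$ whose entries are partial geometric sums in $\delta$.

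\textbf{Paragraph 3 --- Extracting both rates and identifying the main obstacle.} The asymptotic rate is then simply the spectral radius $\rho(M(\delta, n))$, and a Taylor expansion of its larger eigenvalue in small $\delta$ and large $n$ produces the claimed limit $1 - 2\delta - 2\delta/n + 2\delta^2 + \bigO(\delta^2/n) + \bigO(\delta^3)$. For the non-asymptotic bound, I would build a Lyapunov function $R(x) = c_1 (u^T x)^2 + c_2 \|Vx\|^2$ with positive coefficients $(c_1,c_2)$ chosen as a non-negative left super-solution to $M(\delta, n)$, i.e.\ $c^T M(\delta, n) \le (1 - 2\delta + 4\delta^2)\, c^T$, and normalized so that $R(x) \ge f(x) - f^*$; then $R_0 := R(x^0)$ depends on $x^0$ and $\delta$ through $(c_1,c_2)$. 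The restrictions $\delta < 0.4$ and $n \ge 10$ enter precisely to guarantee $1 - 2\delta + 4\delta^2 < 1$ and that such $(c_1,c_2)$ exist with controllable magnitudes. The main obstacle is the bilinear expectation in Paragraph~2: because the weights $\delta^{n-j+1}$ are position-dependent, evaluating $\E_\pi s_n^2$ and $\E_\pi \sum_k (x_{\pi(k)}^{\text{new}})^2$ requires careful bookkeeping of partial geometric series, and the algebraic simplifications needed to recognize the clean factors $1 - 2\delta + 4\delta^2$ and $1 - 2\delta - 2\delta/n + 2\delta^2$ inside $\rho(M(\delta, n))$ are where most of the technical effort lies.
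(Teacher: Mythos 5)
This theorem is quoted verbatim from \cite[Theorem 3.3]{wrightRPCD15} and the present paper contains no proof of it, so the only meaningful comparison is with that reference: your sketch reconstructs essentially the same argument, namely the reduction, via permutation symmetry (the paper's Lemma~\ref{lem:wright_lemma}), of the expected second-moment matrix of the iterates to the closed two-parameter family $aI + b\bfone\bfone^T$, an explicit $2\times 2$ linear recursion for the resulting pair of scalars, and a spectral-radius/Taylor-expansion analysis of that recursion for the asymptotic rate together with a Lyapunov-type bound for the non-asymptotic one. Your coordinate-level update rules for $(x_i,s)$ and the splitting $\R^n=\mathrm{span}(\bfone)\oplus\bfone^{\perp}$ are a correct and equivalent reparametrization of that scheme, so I see no gap beyond the bookkeeping you already acknowledge is needed to verify the explicit constants $1-2\delta+4\delta^2$ and the role of the restrictions $\delta<0.4$ and $n\ge 10$.
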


Theorem \ref{thm-rpcd-ima} shows that for the particular class of quadratic problems whose Hessian matrix satisfies \eqref{eq:Ainvariant}, the convergence rate (in improvement sequence $\Ly_3$) of RPCD is faster than that of RCD in \eqref{eq:rcd.rate.R} in terms of the best known upper bounds. This is the first theoretical evidence that supports the empirical results showing RPCD often outperforms RCD \cite{Recht2012}. In a follow-up work \cite{wrightRPCD17}, Lee and Wright generalize the results of Theorem \ref{thm-rpcd-ima} to quadratic problems, whose Hessian matrix satisfies
\begin{equation} \label{eq:Ainvariant-generalized}
	A = \ddd I + (1-\ddd) u u^T, \quad \mbox{where} \quad \ddd \in (0,n/(n-1)),
\end{equation}
where $u\in\R^n$ is a vector with elements of size $\bigO(1)$ (this generalizes \eqref{eq:Ainvariant} that corresponds to $u=\bfone$). The conclusions are similar to \cite{wrightRPCD15}, but the analysis is different because $A$ is no longer a permutation-invariant matrix.

\section{Performance of RPCD vs RCD on a class of diagonally dominant matrices}\label{sec:4}
As described in the previous section, the existing works \cite{sun2016worst,wrightRPCD15} analyze the performance of RPCD for quadratic problems, whose Hessian satisfies \eqref{eq:Ainvariant} for small $\delta$. Here, we consider the other extreme, i.e., the $\delta>1$ case, and provide tight convergence rate comparisons between RPCD, RCD and CCD with respect to all there improvement sequences defined in Section \ref{ssec:rate_criteria}. In deriving convergence rate guarantees, we do not resort to the tools that are used in the earlier works on RPCD \cite{sun2016worst,wrightRPCD15,wrightRPCD17}. Instead, we present a novel analysis based on Perron-Frobenius theory that enables us to compute convergence rate bounds for all three criteria. For notational simplicity, we introduce the reformulation $\alpha=\delta-1$, which yields
\begin{equation}\label{def-An}
	A = (1+\alpha)I - \alpha \ones \ones^T, \quad \mbox{where} \quad \alpha \in (0,1/(n-1)).
\end{equation}
It is simple to check that $A$ has one eigenvalue at $1-(n-1)\alpha$ with the corresponding eigenvector $\ones$ and other $n-1$ eigenvalues equal to $1+\alpha$. In particular, as $\alpha$ goes to zero, the condition number of $A$ gets smaller and in the limit $A$ is the identity matrix. On the other hand, as $\alpha\to \frac{1}{n-1}$, the matrix gets ill-conditioned. Therefore, the parameter
    \beq t:=\max_i \frac{\sum_{j\neq i}A_{ij}}{A_{ii}} = \alpha(n-1) \in (0,1) \label{eq:t-definition}
    \eeq
is a measure of diagonal dominance. In the remainder of this section, we analyze the performance of RPCD, RCD and CCD in improvement sequence $\Ly_1$ and the performance of RPCD and RCD in improvement sequences $\Ly_2$ and $\Ly_3$ with respect to this diagonal dominance measure.

\subsection{Convergence rates of RPCD, RCD and CCD in improvement sequence $\Ly_1$}\label{ssec:lyapunov1}
In this section, we compare convergence rates of RPCD, RCD and CCD, where improvement sequence $\Ly_1(x^k) = \norm{\E x^k -\xs}$ is chosen as the convergence criterion (as in Theorem \ref{prop: slower, iterates}). As we highlighted in Section \ref{ssec:rate_criteria}, we first compute the expected iteration matrices of the RPCD and RCD algorithms, and show that they are symmetric. Then, we compute their spectral radii to conclude the per-epoch worst-case convergence rate of RPCD and RCD, and analyze their ratio in Proposition \ref{thm-monotonic-rpcd-speedup}. We also show that the asymptotic worst-case convergence rate of CCD is faster than that of RPCD and RCD in Proposition \ref{thm:ccd}.

We begin our discussion by writing the expected RPCD iterates (see \eqref{eq:rpcd} and \eqref{eq:Brpcd}) as follows
\begin{equation}\label{eq:CCDEqualsCCDpi}
	\E_\ell \xp^{(\ell+1)n} = \rpcdE \, \xp^{\ell n}.
\end{equation}
Note that since the Hessian matrix $A$ is permutation invariant, the iteration matrix of the CCD-$\pi$ algorithm for any cyclic order $\pi$ is equal to the iteration matrix of the standart CCD algorithm, i.e., $\ccd=\ccdpi$ for all orders $\pi$. Therefore, we have $\rpcdE = \E_\pi [P_\pi \ccd P_\pi^T] = \E_P [P \ccd P^T]$, where we drop the subscript $\pi$ from the matrices for notational simplicity. In order to obtain a formula for $\rpcdE$, we first reformulate the CCD iteration matrix in \eqref{eq:ccd} as follows
\begin{equation*}
	\ccd = (I-N)^{-1}N^T = I - (I-N)^{-1}(I-N-N^T) =   I - \Gamma^{-1} A,
\end{equation*}
where $\Gamma = I-N$. Using this reformulation, the expected iteration matrix of RPCD can computed as follows
\begin{equation*}
	\rpcdE = \E_P \left[ P \ccd P^T \right] =  \E_P \left[ P(I - \Gamma^{-1} A)P^T \right] = I - \E_P \left[ P\Gamma^{-1}P^T \right] A,
\end{equation*}
where we used the fact that $PP^T = I$ and $AP^T = P^TA$. For the case the Hessian matrix $A$ satisfies \eqref{def-An}, $\Gamma^{-1}$ can be explicitly computed as
\begin{eqnarray} \label{eqn-gamma-inv}
	\Gamma^{-1} = \mbox{toeplitz} (c,r),
\end{eqnarray}
where $\mbox{toeplitz} (c,r)$ denotes the Toeplitz matrix with the first column $c$ and the first row $r$, which are given by
\begin{eqnarray*}		
		c = \begin{bmatrix} 1, & \alpha, & \alpha(1+\alpha), & \alpha(1+\alpha)^2, & \dots, & \alpha(1+\alpha)^{n-2}  \end{bmatrix}^T,  \quad
		r = [1, 0 , 0, \dots, 0].
\end{eqnarray*}
In order to compute $\E_P \left[ P\Gamma^{-1}P^T \right]$, we use the following lemma, which states that expectation over all permutations separately averages the diagonal and off-diagonal entries of the permuted matrix.

\begin{lemma}\cite[Lemma 3.1]{wrightRPCD15}\label{lem:wright_lemma}
Given any matrix $Q \in \R^{n\times n}$ and permutation matrix $P$ selected uniformly at random from the set of all permutations, we have
\begin{equation*}
	\E_P[P Q P^T] = \tau_1 I + \tau_2 \bfone\bfone^T,
\end{equation*}
where
\begin{equation}\label{eq:wright-lemma}
\tau_2 = \frac{\bfone^T Q \bfone - \trace(Q)}{n(n-1)} \quad \mbox{and} \quad \tau_1 = \frac{\trace (Q)}{n} - \tau_2.
\end{equation}
\end{lemma}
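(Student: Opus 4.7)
The plan is to exploit the symmetry inherent in averaging over the symmetric group $S_n$. First, I would observe that if $P$ is the permutation matrix associated with $\pi \in S_n$, then the $(i,j)$ entry of $P Q P^T$ can be written as $(PQP^T)_{ij} = Q_{\pi^{-1}(i),\,\pi^{-1}(j)}$. Hence the goal reduces to evaluating $\E_\pi [Q_{\pi^{-1}(i),\,\pi^{-1}(j)}]$ under the uniform distribution on $S_n$, separately for diagonal and off-diagonal $(i,j)$.

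Next I would handle the two cases. For a diagonal entry $i=j$, as $\pi$ ranges uniformly over $S_n$, the value $\pi^{-1}(i)$ is uniform on $\{1,\dots,n\}$, so
\begin{equation*}
\E_\pi [Q_{\pi^{-1}(i),\,\pi^{-1}(i)}] \;=\; \frac{1}{n} \sum_{k=1}^n Q_{kk} \;=\; \frac{\trace(Q)}{n}.
\end{equation*}
For an off-diagonal entry $i\ne j$, the ordered pair $(\pi^{-1}(i),\pi^{-1}(j))$ is uniformly distributed over the $n(n-1)$ ordered pairs $(k,\ell)$ with $k\ne \ell$. Therefore
\begin{equation*}
\E_\pi [Q_{\pi^{-1}(i),\,\pi^{-1}(j)}] \;=\; \frac{1}{n(n-1)} \sum_{k\ne \ell} Q_{k\ell} \;=\; \frac{\bfone^T Q \bfone - \trace(Q)}{n(n-1)}.
\end{equation*}
In both cases the result is independent of $(i,j)$ beyond the diagonal/off-diagonal distinction, so $\E_P[P Q P^T]$ has a constant value on the diagonal and a (possibly different) constant value off the diagonal. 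Matching such a matrix to the ansatz $\tau_1 I + \tau_2 \bfone\bfone^T$, whose diagonal entries equal $\tau_1+\tau_2$ and whose off-diagonal entries equal $\tau_2$, forces $\tau_2 = (\bfone^T Q\bfone - \trace(Q))/(n(n-1))$ and $\tau_1+\tau_2 = \trace(Q)/n$, i.e., $\tau_1 = \trace(Q)/n - \tau_2$, exactly as claimed.

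There is no real obstacle here; the statement is a direct consequence of the transitivity of $S_n$ on singletons and on ordered pairs of distinct indices. The only step that warrants care is the bookkeeping in the first identity $(PQP^T)_{ij}=Q_{\pi^{-1}(i),\pi^{-1}(j)}$ (getting the direction of the permutation right via $P e_k = e_{\pi(k)}$), after which the computation is purely combinatorial and requires no algebraic structure on $Q$.
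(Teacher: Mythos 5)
Your proof is correct. The paper does not prove this lemma at all---it is imported verbatim as \cite[Lemma 3.1]{wrightRPCD15}---so there is no in-paper argument to compare against; your derivation is the standard symmetry argument (conjugation by $P$ permutes indices via $(PQP^T)_{ij}=Q_{\pi^{-1}(i),\pi^{-1}(j)}$, uniformity of $\pi$ makes $\pi^{-1}(i)$ uniform on singletons and $(\pi^{-1}(i),\pi^{-1}(j))$ uniform on ordered pairs of distinct indices, and matching against $\tau_1 I+\tau_2\bfone\bfone^T$ pins down $\tau_1,\tau_2$), which is exactly the natural route and matches the stated formulas \eqref{eq:wright-lemma}.
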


Letting $Q=\Gamma^{-1}$ in Lemma \ref{lem:wright_lemma}, we observe that the matrix $\E_P [P\Gamma^{-1}P^T]$ has diagonals equal to one and all the off-diagonal entries equal to each other:
\begin{equation} \label{eq:expected-gamma-inverse-matrix}
	\E_P [P\Gamma^{-1}P^T] = (1-\gamma) I + \gamma \bfone\bfone^T,
\end{equation}
where $\gamma$ can be found as the average of the off-diagonal entries of $\Gamma^{-1}$. The following lemma (whose proof is given in Appendix \ref{app:lemm-gamma-proof}) provides an explicit expression for $\gamma$.

\begin{lemma}\label{lemm-gamma}
For any $\alpha \in (0,1/(n-1))$, we have $$\gamma = \frac{(1+\alpha)^n - \alpha n - 1}{\alpha n(n-1)}, $$
where $\gamma$ denotes the off-diagonal entries of $\E_P [P\Gamma^{-1}P^T]$ in \eqref{eq:expected-gamma-inverse-matrix}.
\end{lemma}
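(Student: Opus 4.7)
The plan is to invoke Lemma~\ref{lem:wright_lemma} with $Q=\Gamma^{-1}$, which immediately gives
\begin{equation*}
    \gamma \;=\; \tau_2 \;=\; \frac{\bfone^T \Gamma^{-1}\bfone - \trace(\Gamma^{-1})}{n(n-1)},
\end{equation*}
reducing the problem to evaluating $\trace(\Gamma^{-1})$ and $\bfone^T \Gamma^{-1}\bfone$ from the Toeplitz description in \eqref{eqn-gamma-inv}. Since $\Gamma^{-1}$ is lower triangular with unit diagonal, $\trace(\Gamma^{-1})=n$. The off-diagonal structure is simple: for $i>j$ the $(i,j)$-entry is $\alpha(1+\alpha)^{i-j-1}$, and for $i<j$ the entry is zero. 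Grouping by the subdiagonal index $k=i-j\in\{1,\dots,n-1\}$, each subdiagonal has $n-k$ entries of common value $\alpha(1+\alpha)^{k-1}$, so
\begin{equation*}
    \bfone^T \Gamma^{-1}\bfone \;=\; n + \alpha\sum_{k=1}^{n-1}(n-k)(1+\alpha)^{k-1}.
\end{equation*}

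The remaining step is to evaluate the double-sum in closed form. I would swap the order of summation by writing $n-k=\sum_{j=1}^{n-1-(k-1)}1$ (equivalently, thinking of $(n-k)(1+\alpha)^{k-1}$ as the number of pairs $(i,j)$ in the strictly lower triangle with $i-j=k$), which rearranges the sum into $\sum_{i=1}^{n-1}\bigl(\sum_{\ell=0}^{i-1}(1+\alpha)^{\ell}\bigr)$. Applying the geometric sum formula $\sum_{\ell=0}^{i-1}(1+\alpha)^{\ell}=\bigl((1+\alpha)^{i}-1\bigr)/\alpha$ and summing again over $i$ produces a telescoping collapse, yielding
\begin{equation*}
    \alpha\sum_{k=1}^{n-1}(n-k)(1+\alpha)^{k-1} \;=\; \frac{(1+\alpha)^{n} - (1+\alpha)}{\alpha} - (n-1) \;=\; \frac{(1+\alpha)^{n} - \alpha n - 1}{\alpha}.
\end{equation*}

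Plugging back, $\bfone^T \Gamma^{-1}\bfone - \trace(\Gamma^{-1}) = \bigl((1+\alpha)^{n} - \alpha n - 1\bigr)/\alpha$, and dividing by $n(n-1)$ delivers the claimed formula. No step is genuinely hard: the Toeplitz structure is already provided in \eqref{eqn-gamma-inv} and Lemma~\ref{lem:wright_lemma} does the averaging. The main (and only) care required is bookkeeping in the double summation; the cleanest route is the swap-and-telescope argument above, which avoids differentiating a geometric series or using induction. A minor sanity check worth including is that $\gamma\to 0$ as $\alpha\to 0$ (by L'Hôpital or a Taylor expansion of $(1+\alpha)^n$), consistent with $\Gamma^{-1}\to I$ in that limit.
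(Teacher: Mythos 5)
Your proof is correct and follows essentially the same route as the paper's: both apply Lemma~\ref{lem:wright_lemma} with $Q=\Gamma^{-1}$ and reduce the claim to evaluating the weighted subdiagonal sum $\sum_{j=0}^{n-2}(n-1-j)\,\alpha(1+\alpha)^j$. The only difference is in that final evaluation---the paper splits the sum and invokes an auxiliary identity for $\sum_{j} j\eta^{j}$ obtained by differentiating a geometric series (Lemma~\ref{lem-sum-rule}), whereas you swap the order of summation into nested geometric sums; both computations are sound and yield the same closed form.
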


Using Lemma \ref{lemm-gamma}, it follows from the definition of $A$ in \eqref{def-An} and equation \eqref{eq:expected-gamma-inverse-matrix} that
\begin{equation*}
	\rpcdE = I - \E_P [P\Gamma^{-1}P^T] A = ((n-1)\gamma-\beta) I + \beta \bfone\bfone^T,
\end{equation*}
where 
\beq
	\beta = \alpha - \gamma + \alpha \gamma (n-2). \nonumber
\eeq
Since $\rpcdE$ is a symmetric matrix, then by \eqref{eq:rmk1}, it suffices to compute the spectral radius of $\rpcdE$ to obtain the worst-case performance of RPCD with respect to improvement sequence $\Ly_1$. To this end, we note that for any $\alpha \in (0,1/(n-1))$, $\rpcdE >0$ since $\rpcdE = \E_P [P \ccd P^T]$ and $\ccd\geq0$ with at least one strictly positive entry in both the diagonal and off-diagonal parts (see also \eqref{eq-ccd iter matrix for ex} for an explicit formula of $\ccd$). Then, by the Perron-Frobenius Theorem \cite[Lemma 2.8]{varga2009matrix}, we have
\begin{align*}
	\rho(\rpcdE) & = \sum_{j=1}^n [\rpcdE]_{ij}, \quad \mbox{for all $i\in[n]$} \\
    	& = (n-1) (\gamma \alpha + \beta) \\
    	& = (n-1) (\alpha - \gamma + \alpha \gamma (n-1)) \\
        & = 1 - \left[\left(1-\alpha(n-1)\right) \left(1+\gamma(n-1)\right)\right].
\end{align*}
Substituting the formula for $\gamma$ from Lemma \ref{lemm-gamma} above, we obtain the spectral radius of the RPCD iteration matrix as follows
\begin{equation} \label{eq-rho-RPCD}
	\rho(\rpcdE) = 1 - \left(1-\alpha(n-1)\right) \frac{(1+\alpha)^n  - 1}{\alpha n} = 1 - \frac{1-t}{n} \left( \frac{\left(1+\frac{t}{n-1}\right)^n  - 1}{\frac{t}{n-1}} \right),
\end{equation}
where $t=\alpha(n-1)$ denotes the diagonal dominance factor (as defined in \eqref{eq:t-definition}).

For the RCD algorithm, on the other hand, we have (by \eqref{eq:rcd-definition} and \eqref{eq:Brcd}) the following expected iterates
\begin{equation*}
	\E_k \xr^{k+1} = \rcdE \, \xr^k, \quad \mbox{where} \quad \rcdE = I-\frac{1}{n}A.
\end{equation*}
Since $A$ is a symmetric matrix, then by \eqref{eq:rmk1}, the per-epoch worst-case asymptotic rate of RCD with respect to improvement sequence $\Ly_1$ can be found as
\begin{equation*}
	\rho(\rcdE)^n = \left(1-\frac{1}{n}\lambda_{\min}(A)\right)^n =  \left( 1-\frac{1-t}{n} \right)^n.
\end{equation*}
In Proposition \ref{thm-monotonic-rpcd-speedup}, we compare the performance of RPCD and RCD with respect to improvement sequence $\Ly_1$. To this end, we define
\begin{equation}
	s(t,n) = \frac{-\log \rho (\rpcdE)}{-\log \rho(\rcdE)^n}, \label{eq:s-definition}
\end{equation}
(where $\log$ denotes the natural logarithm), which is equal to the ratio between the number of epochs required to guarantee $\norm{\E x^{\ell n}-\xs}\leq\epsilon$ for RCD and RPCD algorithms. In particular $s(t,n)>1$ implies RPCD has a faster worst-case convergence rate than RCD. In the following theorem, we show that RPCD is faster than RCD for any $t\in(0,1)$ and $n\geq2$, and quantify the rate of improvement.

\begin{proposition}\label{thm-monotonic-rpcd-speedup}
The following statements are true: 
\begin{itemize}
	\item [$(i)$] The function $s(t,n)$ is strictly decreasing in $t$ over $(0,1)$.
	\item [$(ii)$] $\lim_{t\to 0}s(t,n) = \infty.$
	\item [$(iii)$] Let $g(n):= \lim_{t\to 1} s(t,n)$. We have $g(n) \in [3/2,e-1)$, for any $n\geq 2$. Furthermore, $g(n)$ is strictly increasing in $n\geq2$ satisfying
\begin{equation*}
    g(2)=3/2 \quad \mbox{and} \quad \lim_{n\to\infty} g(n) = e-1.
\end{equation*}
\end{itemize}	
\end{proposition}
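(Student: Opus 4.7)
My plan is to work directly with the closed-form spectral radii
$$
\rho(\rpcdE) = 1 - \frac{1-t}{n}\,h(t), \qquad \rho(\rcdE) = 1 - \frac{1-t}{n},
$$
derived in the preceding paragraphs, where $h(t) := ((1 + t/(n-1))^n - 1)/(t/(n-1))$ is a degree-$(n-1)$ polynomial in $t$ with positive coefficients and $h(0) = n$. I then analyze the smooth one-variable function $s(t,n)$ on $(0,1)$ part by part. Part (ii) is immediate: as $t \to 0^+$ one has $(1-t)h(t)/n \to 1$, so $\rho(\rpcdE) \to 0$ and the numerator $-\log\rho(\rpcdE)$ diverges, while the denominator tends to the finite value $-n\log(1-1/n)$; hence $s(t,n) \to \infty$.

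For part (iii), both numerator and denominator of $s(t,n)$ vanish linearly in $\epsilon := 1-t$ as $t \to 1^-$, so L'Hopital (or a first-order Taylor expansion) yields
$$
g(n) = \frac{h(1)}{n} = \left(\frac{n}{n-1}\right)^{n-1} - \frac{n-1}{n}.
$$
Evaluating at $n=2$ gives $g(2) = 3/2$, and the classical limits $(1+1/(n-1))^{n-1} \to e$ and $(n-1)/n \to 1$ give $\lim_{n\to\infty} g(n) = e-1$. For strict monotonicity, writing $a_k := (1+1/k)^k$,
$$
g(n+1) - g(n) = (a_n - a_{n-1}) - \frac{1}{n(n+1)};
$$
I would prove positivity by applying the mean value theorem to the smooth extension $k \mapsto k\log(1+1/k)$ together with the elementary bound $\log(1+x) > x - x^2/2$, which lower-bounds $a_n - a_{n-1}$ by a quantity strictly greater than $1/(n(n+1))$ for all $n \geq 2$, with $n=2$ checked directly by arithmetic. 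The upper bound $g(n) < e-1$ is then automatic from strict monotonicity and the computed limit.

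Part (i) is the main obstacle. The plan is to show $s'(t) < 0$ on $(0,1)$. Differentiating the ratio and clearing the positive denominator, the sign question reduces to
$$
\frac{\rho'(\rpcdE)}{\rho(\rpcdE)}\,\bigl(-n\log\rho(\rcdE)\bigr) \;>\; \frac{n\rho'(\rcdE)}{\rho(\rcdE)}\,\bigl(-\log\rho(\rpcdE)\bigr),
$$
a single explicit scalar inequality in $t$ for each fixed $n$, where $\rho'(\rpcdE) = (h(t) - (1-t)h'(t))/n$ and $\rho'(\rcdE) = 1/n$. The hard part will be verifying this inequality uniformly on the full interval $(0,1)$: both sides vanish to the same leading order as $t \to 1^-$, so endpoint Taylor expansions alone do not suffice. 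My preferred attack is to reparametrize by $\alpha = t/(n-1) \in (0, 1/(n-1))$, expand $(1+\alpha)^n$ as a binomial series, and reduce the sign question to the nonnegativity of an auxiliary polynomial in $\alpha$ whose coefficients admit a term-by-term combinatorial comparison. As a backup, should direct expansion prove intractable, I would instead exploit the probabilistic representation $\rpcdE = \E_P[P\ccd P^T]$ together with a Jensen- or majorization-type inequality to obtain an integral comparison between $\log\rho(\rpcdE)$ and $n\log\rho(\rcdE)$, thereby proving monotonicity of $s$ without a fully explicit formula.
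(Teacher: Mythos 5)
Your treatment of part (ii) and of the limit in part (iii) is correct and coincides with the paper's: $h(0)=n$ forces $\rho(\rpcdE)\to 0$ so the numerator of $s$ diverges while the denominator tends to $-n\log(1-1/n)$, and L'H\^opital gives $g(n)=h(1)/n=(1+\tfrac{1}{n-1})^{n-1}+\tfrac1n-1$, whence $g(2)=3/2$ and $g(n)\to e-1$. The two remaining pieces, however, have genuine gaps.

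For part (i) — which you rightly identify as the main obstacle — your reduction to the sign of $\rho_1'/\rho_1-\rho_2'/\rho_2$ matches the paper's starting point, but the proposed execution does not go through. After clearing denominators the inequality to be verified is
\begin{equation*}
-n\log\Bigl(1-\tfrac{1-t}{n}\Bigr)\Bigl(1-\tfrac{1-t}{n}\Bigr)\,q(t)\;>\;-\log\Bigl(1-\tfrac{1-t}{n}h(t)\Bigr)\Bigl(1-\tfrac{1-t}{n}h(t)\Bigr),
\qquad q(t):=\tfrac{h(t)+h'(t)(t-1)}{n},
\end{equation*}
and the logarithms do not disappear under a binomial expansion of $(1+\alpha)^n$; moreover both sides vanish as $t\to1^-$, so no coefficient-by-coefficient comparison of a series at a single endpoint can certify the inequality on all of $(0,1)$. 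The paper's route is structurally different: it shows $q$ is convex (its binomial coefficients are nonnegative) and bounds it \emph{below} by the maximum of its tangent lines at $t=0$ and $t=1$; it bounds the right-hand side \emph{above} by exploiting convexity of $h$ (chord bound) together with the fact that $E(y)=-y\log y$ is at most $1/e$ and decreasing on $(1/e,1]$; this reduces the problem to a piecewise-smooth sufficient inequality that is then handled by elementary derivative estimates. Your backup idea (a Jensen/majorization argument on $\rpcdE=\E_P[P\ccd P^T]$) is not developed enough to assess and has no evident connection to monotonicity in $t$. As written, part (i) is a plan, not a proof.

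For the monotonicity of $g(n)$ in part (iii), your identity $g(n+1)-g(n)=(a_n-a_{n-1})-\tfrac{1}{n(n+1)}$ is correct, but the named tools fall short. Writing $\phi(k)=k\log(1+1/k)$, the bound $\log(1+x)>x-x^2/2$ gives $\phi'(n)>\tfrac{n-1}{2n^2(n+1)}$, and combining with $a_{n-1}\ge 2$ and the mean value theorem yields only $a_n-a_{n-1}>\tfrac{n-1}{n^2(n+1)}$, which is \emph{less} than $\tfrac{1}{n(n+1)}=\tfrac{n}{n^2(n+1)}$ for every $n$; to win one needs $a_{n-1}>\tfrac{2n}{n-1}$, which fails for $n\le 5$. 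The paper instead differentiates the continuous extension $\bar g(z)=(1+1/z)^z+\tfrac{1}{z+1}-1$ and uses the sharper Pad\'e-type bound $\log(1+y)\ge\tfrac{2y}{2+y}$ to get $\bar g'(z)\ge\tfrac{1}{(z+1)^2(2z+1)}>0$ for $z\ge1$, from which the discrete claim follows since $g(n)=\bar g(n-1)$. Either adopt that bound (keeping the product structure intact rather than bounding the two differences separately), or verify $n=2,\dots,5$ numerically and run your estimate only for $n\ge6$.
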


\begin{figure}[!tbp]
  \centering
  \begin{minipage}[b]{0.49\textwidth}
    \includegraphics[width=\textwidth]{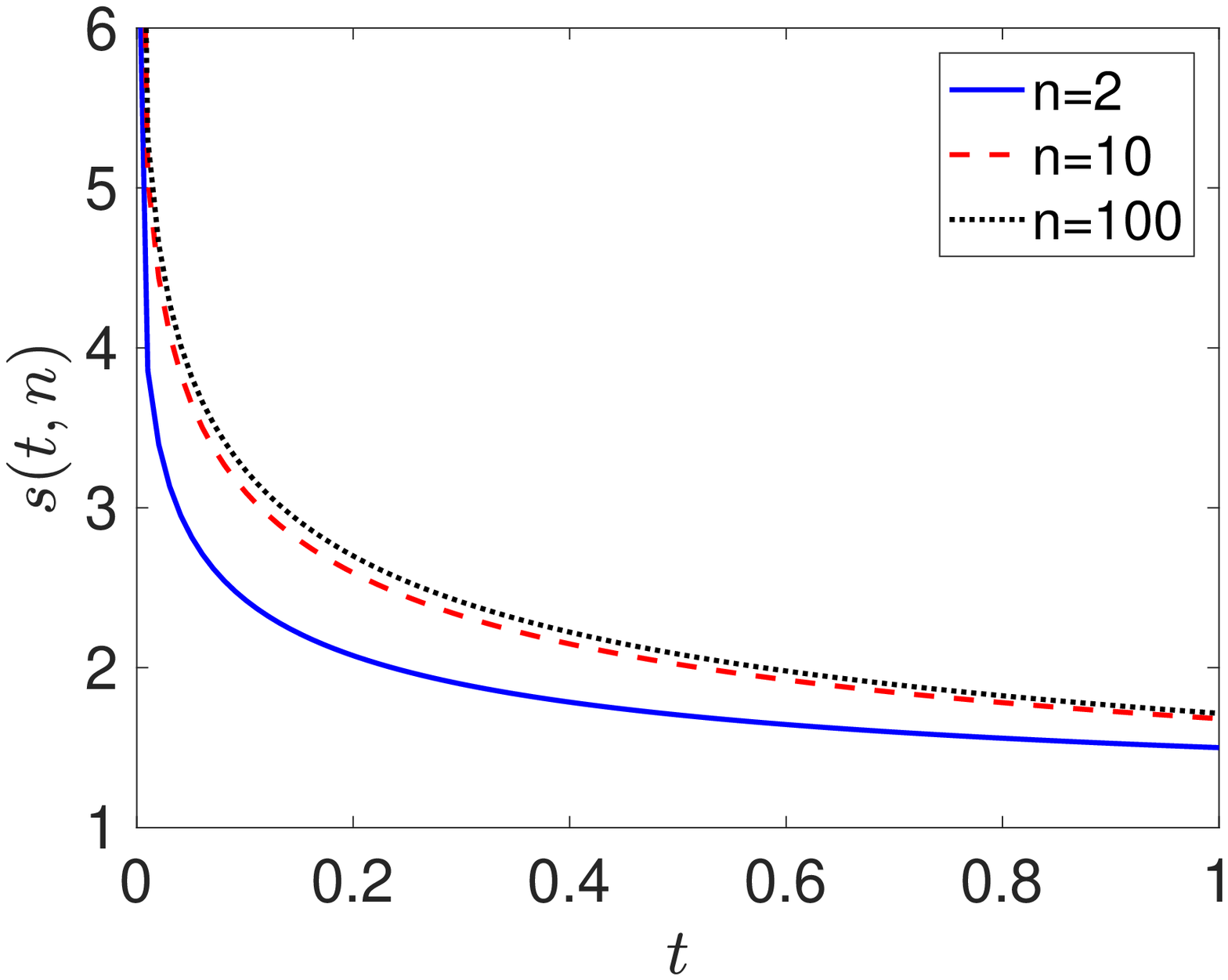}
  \end{minipage}
  \hfill
  \begin{minipage}[b]{0.49\textwidth}
    \includegraphics[width=\textwidth]{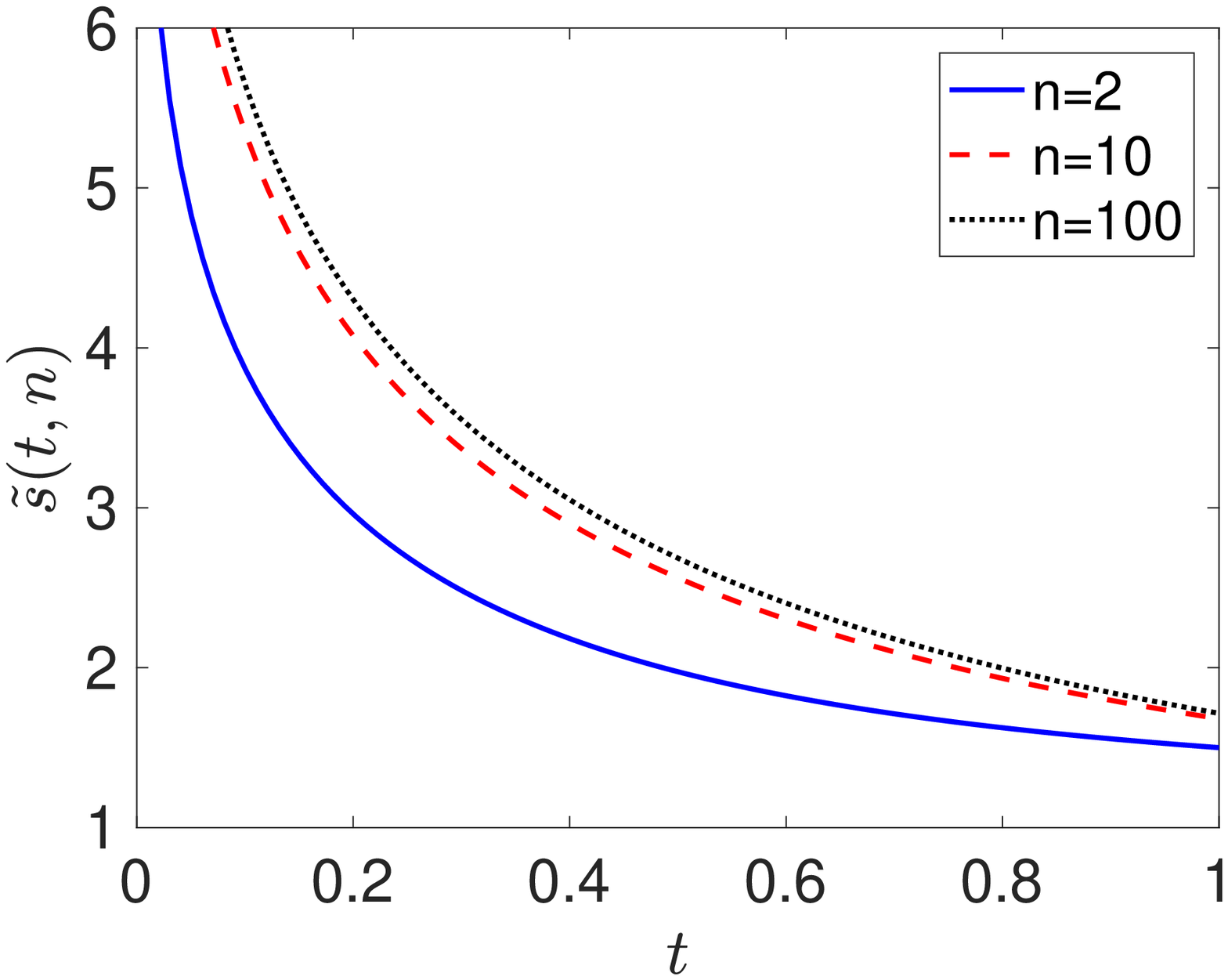}
  \end{minipage}
  \caption{Plot of $s(t,n)$ and $\tilde{s}(t,n)$ versus $t\in(0,1)$ for different values of $n$.}
  \label{fig:s}
\end{figure}

A consequence of Proposition \ref{thm-monotonic-rpcd-speedup} is that RPCD is faster than RCD in the worst-case, for every $t \in (0,1)$ by a factor $s(t,n) > 1$. Furthermore, the amount of acceleration $s(t,n)$ goes to infinity as $\alpha\to0$ for any $n$ fixed. This shows that as the matrix $A$ becomes more and more well-conditioned (as $\alpha \to 0$), the amount of speed-up $s(t,n)$ we obtain with RPCD with respect to RCD goes to infinity. This is consistent with the observation that cyclic orders work well for diagonal-like matrices that are well-conditioned (see e.g. \cite{varga2009matrix}). Proposition \ref{thm-monotonic-rpcd-speedup} is illustrated in Figure \ref{fig:s} (left panel), where we plot the parameter $s(t,n)$ as a function of $t$ for different values of $n$.

We next compare the convergence rate of CCD with respect to RPCD and RCD. To this end, as we discuss in Section \ref{ssec:rate_criteria} (cf. \eqref{eq:rmk1}), we use $\rho(\ccd)$ as the asymptotic per epoch worst-case convergence rate of CCD, whereas for comparison to RCD, we use a per-epoch rate of $\rho(\rcdE)^n$. Note that as discussed in \eqref{eq:CCDEqualsCCDpi}, $\ccd=\ccdpi$ for all $\pi$, and hence $\rho(\ccd)=\rho(\ccdpi)$ for all $\pi$. Although, explicit calculation of $\rho(\ccd)$ appears to be challenging, we prove that the known upper bounds \cite[Theorem 4.12]{ccd_vs_rcd} on $\rho(\ccd)$ is tighter than $\rho(\rpcdE)$, which together with Proposition \ref{thm-monotonic-rpcd-speedup} imply the following result.

\begin{proposition}\label{thm:ccd}
Let $f$ be a quadratic function of the form \eqref{eq:obj1}, whose Hessian matrix given by \eqref{def-An}. Then, the expected iteration matrices of CCD, RPCD and RCD satisfy
\begin{equation}
	\rho(\ccd) < \rho(\rpcdE) < \rho(\rcdE)^n,
\end{equation}
for any $\alpha\in(0,1/(n-1))$ and $n\geq2$.
\end{proposition}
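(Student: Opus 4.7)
The plan is to split the chain $\rho(\ccd) < \rho(\rpcdE) < \rho(\rcdE)^n$ and prove each inequality separately.

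\textbf{Right inequality.} For $\rho(\rpcdE) < \rho(\rcdE)^n$, I would appeal directly to Proposition \ref{thm-monotonic-rpcd-speedup}. Parts $(i)$ and $(iii)$ of that proposition give, for every $t \in (0,1)$ and $n \geq 2$,
$$
s(t,n) \;>\; \lim_{t\to 1^-} s(t,n) \;=\; g(n) \;\geq\; g(2) \;=\; 3/2 \;>\; 1.
$$
Both $\rho(\rpcdE)$ and $\rho(\rcdE)^n = (1-(1-t)/n)^n$ lie strictly in $(0,1)$ for $t \in (0,1)$ and $n \geq 2$, as can be read off the closed-form expressions, so $-\log\rho(\rpcdE)$ and $-\log\rho(\rcdE)^n$ are positive. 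The definition \eqref{eq:s-definition} then converts $s(t,n) > 1$ into $-\log\rho(\rpcdE) > -\log\rho(\rcdE)^n$, and the monotonicity of $-\log$ yields the desired strict inequality.

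\textbf{Left inequality.} For $\rho(\ccd) < \rho(\rpcdE)$, the plan is to invoke the upper bound from \cite[Theorem 4.12]{ccd_vs_rcd}. For the permutation-invariant Hessian \eqref{def-An} all relevant problem parameters ($\mu = 1-(n-1)\alpha$, $L = 1+\alpha$, $\Lmax = 1$) are explicit, so that theorem specializes to a closed-form bound $\rho(\ccd) \leq B(\alpha,n)$. Combined with the explicit formula \eqref{eq-rho-RPCD} for $\rho(\rpcdE)$, the task reduces to the scalar inequality $B(\alpha,n) < \rho(\rpcdE)$ on the range $\alpha \in (0,1/(n-1))$, $n \geq 2$.

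\textbf{Main obstacle.} The algebraic verification $B(\alpha,n) < \rho(\rpcdE)$ is the delicate part. The strategy is to factor out the common positive term $1-t = 1-(n-1)\alpha$ that appears in \eqref{eq-rho-RPCD} and, using the binomial expansion
$$
(1+\alpha)^n - 1 \;=\; \sum_{k=1}^{n}\binom{n}{k}\alpha^k,
$$
write both sides as rational expressions in $\alpha$. After clearing denominators, the inequality should collapse to a polynomial in $\alpha$ with positive leading coefficient; strict positivity on $(0,1/(n-1))$ then follows from a coefficient inspection. If a clean coefficient argument proves elusive, a fallback is to check the two boundary limits $\alpha \to 0^+$ (where both quantities vanish but at different orders in $\alpha$, so the leading-order comparison decides the sign) and $\alpha \to 1/(n-1)^-$ (where both converge to explicit limits that can be ordered directly), and then interpolate using monotonicity in $\alpha$ of the gap $\rho(\rpcdE) - B(\alpha,n)$, in analogy with part $(i)$ of Proposition \ref{thm-monotonic-rpcd-speedup}.
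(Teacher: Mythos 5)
Your proposal follows essentially the same route as the paper: the right inequality is read off from Proposition~\ref{thm-monotonic-rpcd-speedup} exactly as you describe, and the left inequality is obtained by specializing \cite[Theorem 4.12]{ccd_vs_rcd} to the closed-form bound $\rho(\ccd) \leq \frac{1-\mu}{1+\mu}$ and comparing it with \eqref{eq-rho-RPCD} via the binomial expansion. The coefficient inspection you defer does go through as planned: the comparison reduces to $n\alpha > \bigl(1-\tfrac{(n-1)\alpha}{2}\bigr)\bigl((1+\alpha)^n-1\bigr)$, and one checks that $\binom{n}{j}-\tfrac{n-1}{2}\binom{n}{j-1} \leq 0$ for all $j \in \{2,\dots,n\}$.
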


\subsection{Convergence rates of RPCD and RCD in improvement sequences $\Ly_2$ \& $\Ly_3$} \label{ssec:lyapunov2and3}
In this section, we compare the rate of RPCD and RCD with respect to improvement sequences $\Ly_2$ and $\Ly_3$. When the Hessian matrix $A$ satisfies \eqref{def-An}, the smallest eigenvalue of $A$ can be found as follows
\begin{equation} \label{eq:mu-definition}
	\mu = 1-t = 1-\alpha (n-1).
\end{equation}
Plugging this value in the convergence guarantee of RCD in \eqref{eq:rcd.rate.R}, we can obtain a convergence guarantee on both improvement sequences $\Ly_2$ and $\Ly_3$ as the left hand-side of \eqref{eq:rcd.rate.R} upper bounds both $2\Ly_2$ and $\Ly_3$. However, for the particular problem class we consider in this paper, we derive a tighter convergence rate guarantee for RCD in the next proposition, whose proof is deferred to Appendix \ref{app:rcd-bound}.

\begin{proposition}\label{lemma-rcd-upper bound}
Let $f$ be a quadratic function of the form \eqref{eq:obj1}, whose Hessian matrix given by \eqref{def-An}. Then, RCD iterations satisfy
\begin{equation}
	\E \|\xr^{k} - x^*\|^2 \leq \left(1-\frac{2\mu}{n} + \frac{\mu^2}{n} \right)^{k} \|x^0 - x^*\|^2, \label{eq:upper bound-rcd norm square}
\end{equation}
and
\begin{equation}
    \E \left( f(\xr^{k}) - f(x^*) \right) \leq \left(1-\frac{\mu}{n} \right)^{k} \left( f(x^0) - f(x^*) \right). \label{eq:upper bound-rcd subopt}
\end{equation}
\end{proposition}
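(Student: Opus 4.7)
The plan is to derive both bounds from one-step identities for the expected second-moment matrices $\E[(\rcd)^T \rcd]$ and $\E[(\rcd)^T A \rcd]$ set up at the end of Section~\ref{ssec:rate_criteria}, exploiting two facts: under the normalization \eqref{eq:unit-diag} the single-step operator simplifies to $\rcd = I - e_{i_k} e_{i_k}^T A$, and the matrix $A$ from \eqref{def-An} has only two distinct eigenvalues, $\mu = 1 - \alpha(n-1)$ and $L = 1 + \alpha$. Both bounds then reduce to a one-line spectral calculation over these two eigenvalues.

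For \eqref{eq:upper bound-rcd norm square}, first I would expand $(\rcd)^T \rcd = I - e_{i_k} e_{i_k}^T A - A e_{i_k} e_{i_k}^T + A e_{i_k} e_{i_k}^T A$ using $A = A^T$ and $e_{i_k}^T e_{i_k} = 1$. Averaging with $\E[e_{i_k} e_{i_k}^T] = I/n$ and $\sum_i A e_i e_i^T A = A^2$ collapses this to $\E[(\rcd)^T \rcd] = I - (2/n) A + (1/n) A^2$, whose eigenvalues are $1 - \lambda(2-\lambda)/n$ for $\lambda \in \{\mu, L\}$. Since $\lambda \mapsto \lambda(2-\lambda)$ is a concave parabola peaking at $\lambda = 1$, its minimum over $\{\mu, L\}$ is attained at whichever eigenvalue lies farther from $1$; the elementary inequality $|1-\mu| = \alpha(n-1) \geq \alpha = |1-L|$ identifies $\mu$ as the extremal one, giving spectral norm $1 - 2\mu/n + \mu^2/n$. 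Taking iterated expectations across the independent RCD iterations yields \eqref{eq:upper bound-rcd norm square}.

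For \eqref{eq:upper bound-rcd subopt}, an analogous expansion of $(\rcd)^T A \rcd = A - 2 A e_{i_k} e_{i_k}^T A + A_{i_k i_k} A e_{i_k} e_{i_k}^T A$ together with $A_{i_k i_k} = 1$ collapses the cross terms to $\E[(\rcd)^T A \rcd] = A - (1/n) A^2$. Substituting $y = A^{1/2} \xr^k$ rewrites the per-iteration identity as $\E f(\xr^{k+1}) = \tfrac{1}{2} y^T (I - (1/n) A) y \leq \|I - (1/n) A\| \, f(\xr^k)$. Because $0 < \mu < L < n/(n-1) \leq n$, this spectral norm is simply $1 - \mu/n$, and iterating gives \eqref{eq:upper bound-rcd subopt}.

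The main obstacle is essentially bookkeeping, since both reductions are mechanical once $\E[e_i e_i^T] = I/n$ is invoked. The one subtle step is verifying that $\mu$, rather than $L$, attains the worst case in the first bound; this relies on the two-eigenvalue structure of \eqref{def-An} and the elementary comparison $\alpha(n-1) \geq \alpha$ for $n \geq 2$. It is precisely because $A$ has so few distinct eigenvalues that the resulting rates are sharper than the generic RCD bound \eqref{eq:rcd.rate.R}.
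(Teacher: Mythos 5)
Your proposal is correct and follows essentially the same route as the paper's proof: expand $\E[(\rcd)^T\rcd]=I-\tfrac{2}{n}A+\tfrac{1}{n}A^2$ and $\E[(\rcd)^TA\rcd]=A-\tfrac{1}{n}A^2$ using $\E[e_{i_k}e_{i_k}^T]=\tfrac{1}{n}I$ and $A_{i_ki_k}=1$, then take spectral norms over the two eigenvalues $\mu$ and $L$. Your explicit parabola argument for why $\mu$ (not $L$) attains the maximum of $1-2\lambda/n+\lambda^2/n$ is a small refinement of a step the paper merely asserts.
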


\begin{figure}[!tbp]
  \centering
  \begin{minipage}[b]{0.49\textwidth}
    \includegraphics[width=\textwidth]{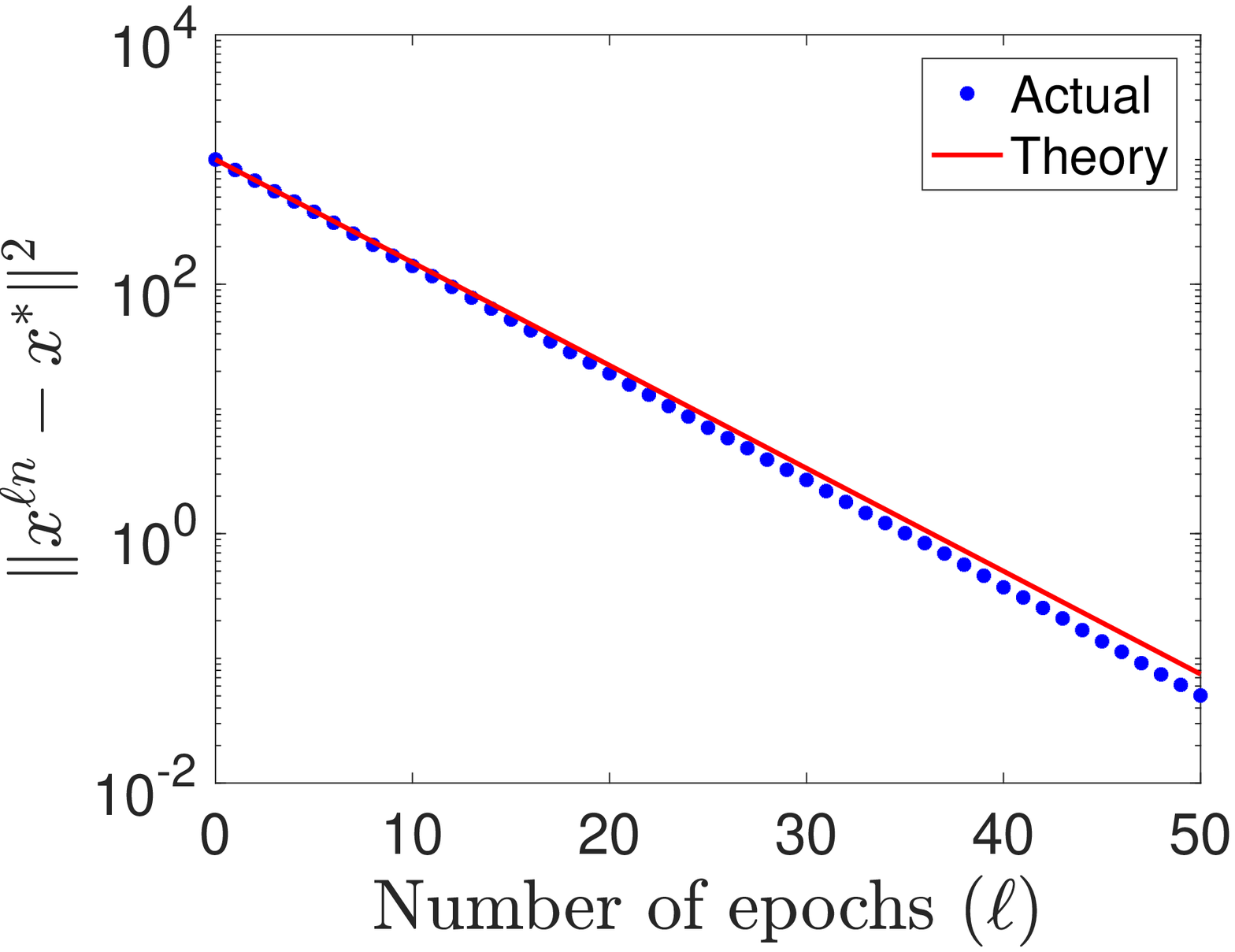}
  \end{minipage}
  \hfill
  \begin{minipage}[b]{0.49\textwidth}
    \includegraphics[width=\textwidth]{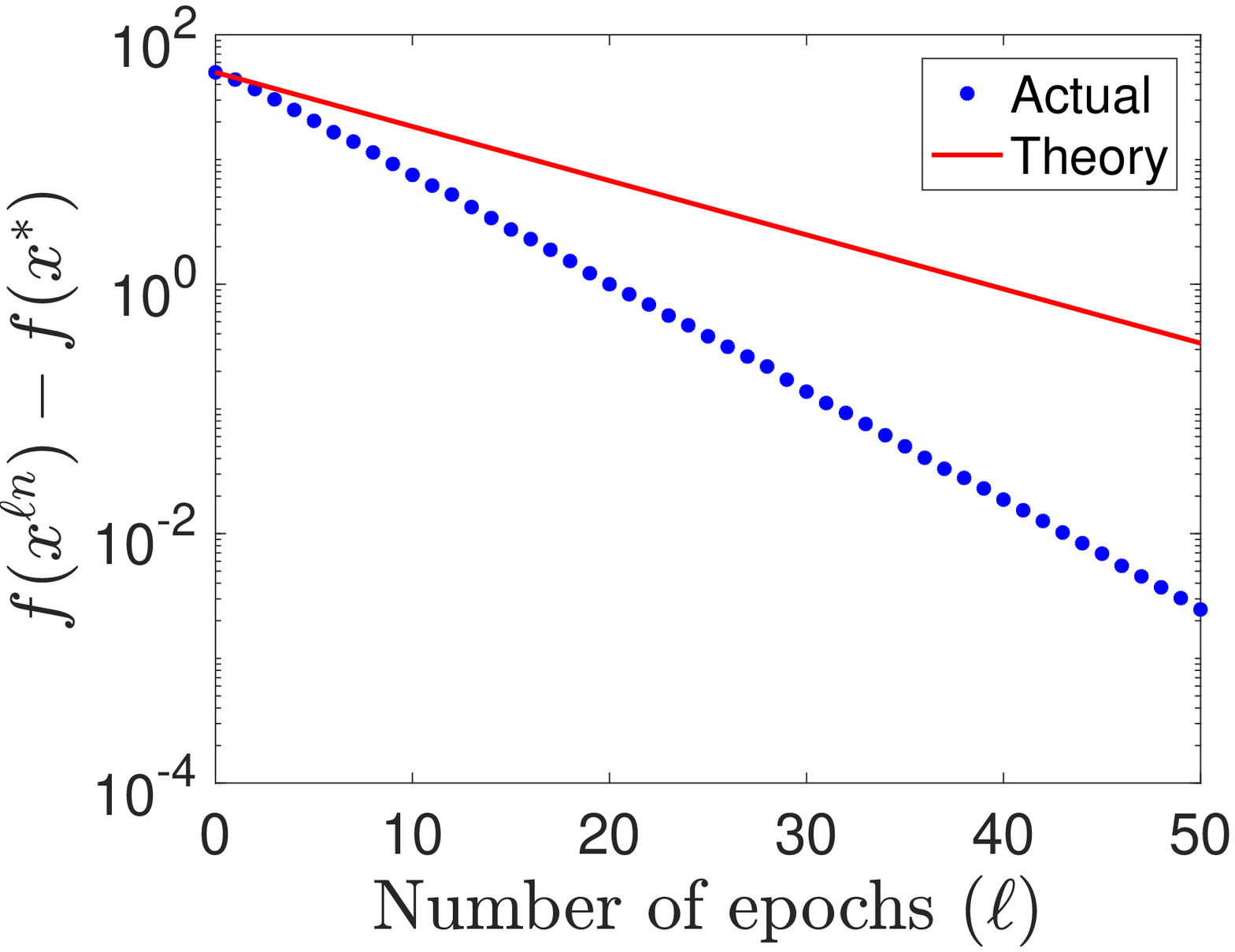}
  \end{minipage}
  \caption{Tightness of the bounds in Proposition \ref{lemma-rcd-upper bound} when $n=1000$ and $\alpha=\frac{0.9}{n-1}$: Left figure for \eqref{eq:upper bound-rcd norm square} and right figure for \eqref{eq:upper bound-rcd subopt}.}
\end{figure}

\begin{remark}
We observe that the upper bound in \eqref{eq:upper bound-rcd norm square} is smaller (tighter) than the upper bound in \eqref{eq:rcd.rate.R} for any $\alpha \in (0,1/(n-1))$ because
	$$  1-\frac{2\mu}{n} + \frac{\mu^2}{n} < 1-\frac{2\mu}{n} + \frac{2\mu^2}{n} = 1-\frac{2\mu(1-\mu)}{n} = 1-\frac{2\mu(1-\mu^2)}{n(1+\mu)} < 1 - \frac{2\mu}{n(1+\mu)},$$
where the inequalities are due to the fact that $\mu = 1-\alpha (n-1) \in (0,1)$.
\end{remark}

We next analyze the performance of RPCD in the following proposition and show that the convergence rate guarantee of RPCD is tighter than the convergence rate guarantee of RCD in Proposition \ref{lemma-rcd-upper bound}. The proof of Proposition \ref{theo-subopt-rpcd} is given in Appendix \ref{app:pot6}.

\begin{proposition}\label{theo-subopt-rpcd}
Let $f$ be a quadratic function of the form \eqref{eq:obj1}, whose Hessian matrix given by \eqref{def-An}. Then, RPCD iterations satisfy
\begin{equation} \label{eq:upper bound-rpcd norm square}
	\E \|\xp^{\ell n} - x^*\|^2 \leq \left( 1 - \frac{2\mu}{n} \left( \frac{(1+\alpha)^n - 1}{\alpha} \right) +  \frac{\mu^2}{n} \left( \frac{(1+\alpha)^{2n} - 1}{\alpha(\alpha+2)}\right) \right)^\ell \, \|x^0 - x^*\|^2,
\end{equation}
and
\begin{equation} \label{eq:rpcd-f-upper-bound}
	\E f(\xp^{\ell n}) - f(x^*) \leq \left( 1- \frac{\mu}{n} \left( \frac{(1+\alpha)^{2n}-1}{\alpha(\alpha+2)} \right) \right)^\ell \left( f(x^0) - f(x^*) \right).
\end{equation}
\end{proposition}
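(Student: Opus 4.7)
My strategy is to exploit the permutation invariance of $A$ together with Lemma~\ref{lem:wright_lemma}, reducing the per-epoch analysis to closed-form geometric sums via the identity $\ccd\ones = \ones - \mu v$, where $v = \Gamma^{-1}\ones$ has entries $v_i = (1+\alpha)^{i-1}$. Because $P_\pi^T A P_\pi = A$, one epoch of RPCD is $\rpcd = P_{\pi_\ell}\ccd P_{\pi_\ell}^T$, so any expectation of the form $\E_\pi[\rpcd^T M\,\rpcd]$ becomes $\E_\pi[P_\pi(\ccd^T M \ccd)P_\pi^T]$ and Lemma~\ref{lem:wright_lemma} collapses it to $\tau_1 I + \tau_2 \ones\ones^T$. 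The formula $v_i = (1+\alpha)^{i-1}$ is immediate from the Toeplitz structure \eqref{eqn-gamma-inv} and a geometric-series computation, and then $\ccd\ones = (I - \Gamma^{-1}A)\ones = \ones - \mu v$ since $A\ones = \mu\ones$.

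For the bound on $\Ly_2$, I take $M = I$. Lemma~\ref{lem:wright_lemma} applied to $Q_0 = \ccd^T\ccd$ yields $\tau_1 + n\tau_2 = \ones^T Q_0 \ones/n = \|\ccd\ones\|^2/n$. The sign condition $\tau_2 \geq 0$ comes from entry-wise nonnegativity: both $\Gamma^{-1}$ and $N^T$ are entry-wise nonnegative, so $\ccd = \Gamma^{-1}N^T$ is as well, making $Q_0$ entry-wise nonnegative and forcing $\ones^T Q_0 \ones \geq \trace Q_0$. Combined with Cauchy--Schwarz $(\ones^T x)^2 \leq n\|x\|^2$, this gives the per-epoch contraction $\E\|\xp^{(\ell+1)n}\|^2 \leq (\|\ccd\ones\|^2/n)\cdot\|\xp^{\ell n}\|^2$. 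Expanding $\|\ones - \mu v\|^2$ using $\sum_{k=0}^{n-1}(1+\alpha)^k = \frac{(1+\alpha)^n - 1}{\alpha}$ and $\sum_{k=0}^{n-1}(1+\alpha)^{2k} = \frac{(1+\alpha)^{2n}-1}{\alpha(\alpha+2)}$ matches the factor in \eqref{eq:upper bound-rpcd norm square}, and iterating in $\ell$ finishes this part.

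For the bound on $\Ly_3$, I take $M = A$, producing $\E_\pi[\rpcd^T A\,\rpcd] = \tilde{\tau}_1 I + \tilde{\tau}_2\ones\ones^T$. The target is the Loewner inequality $\tilde{\tau}_1 I + \tilde{\tau}_2\ones\ones^T \preceq R\,A$ for the claimed $R = 1 - \mu S_2/n$ with $S_2 = \frac{(1+\alpha)^{2n}-1}{\alpha(\alpha+2)}$. Since both sides are diagonalized by the eigen-basis of $A$, this splits into two scalar conditions: $\tilde{\tau}_1 + n\tilde{\tau}_2 \leq R\mu$ in the $\ones$-direction, and $\tilde{\tau}_1 \leq R(1+\alpha)$ in the orthogonal complement. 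For the first, a direct expansion using $A\ones = \mu\ones$ gives $\tilde{\tau}_1 + n\tilde{\tau}_2 = (\ccd\ones)^T A(\ccd\ones)/n$, and the identity $(1+\alpha)^{2n} - 1 = \alpha S_1(2 + \alpha S_1)$ (with $S_1 = \frac{(1+\alpha)^n-1}{\alpha}$) collapses this expression to exactly $R\mu$.

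The hard part will be verifying the second scalar inequality, $\tilde{\tau}_1 \leq R(1+\alpha)$, which a short algebraic manipulation shows to be equivalent to $\ones^T M\ones \geq \mu\trace M$ with $M = \ccd^T A\ccd$, i.e.\ $\trace(\ccd^T A \ccd) \leq n - \mu S_2$. I would prove this by explicit computation: using $\trace(\ccd^T A\ccd) = (1+\alpha)\|\ccd\|_F^2 - \alpha\|\ccd^T\ones\|^2$, together with the closed forms arising from the Toeplitz structure \eqref{eqn-gamma-inv}---in particular, $(\ccd^T\ones)_j = (1+\alpha)^{n-j+1}((1+\alpha)^{j-1}-1)$ for $j \geq 2$ (and $0$ for $j = 1$), and an analogous geometric-series expansion for $\|\ccd\|_F^2$---the inequality reduces to a polynomial identity in $\alpha$ that can be verified term by term. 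Once this sign condition is in hand, the per-epoch contraction factor is $R = 1 - \mu S_2/n$, and iterating over $\ell$ epochs yields \eqref{eq:rpcd-f-upper-bound}.
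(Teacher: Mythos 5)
Your argument for \eqref{eq:upper bound-rpcd norm square} is essentially the paper's own proof: apply Lemma~\ref{lem:wright_lemma} to $\ccd^T\ccd$, use entrywise nonnegativity of $\ccd=\Gamma^{-1}N^T$ to conclude $\tau_2\ge 0$ so that the contraction factor is the row sum $\frac1n\|\ccd\bfone\|^2$, and evaluate $\ccd\bfone=\bfone-\mu v$ with $v_i=(1+\alpha)^{i-1}$ by geometric series. (The paper phrases the row-sum step via Perron--Frobenius rather than your Cauchy--Schwarz argument, but that is cosmetic.)

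For \eqref{eq:rpcd-f-upper-bound} you take a genuinely different route. The paper conjugates by $A^{-1/2}$, applies Lemma~\ref{lem:wright_lemma} to $Q=A^{-1/2}\ccd^TA\ccd A^{-1/2}$, asserts that $A^{1/2}\ccd A^{-1/2}$ is entrywise nonnegative, and again reads off the spectral radius as $\frac1n\|A^{1/2}\ccd A^{-1/2}\bfone\|^2$, which it evaluates in closed form. You instead work with $\E_P[P\ccd^TA\ccd P^T]=\tilde\tau_1 I+\tilde\tau_2\bfone\bfone^T$ directly and prove the Loewner bound $\tilde\tau_1 I+\tilde\tau_2\bfone\bfone^T\preceq RA$ by comparing eigenvalues on the two eigenspaces of $A$; this is an equivalent statement to the paper's $\|G\|\le R$ (conjugating your inequality by $A^{-1/2}$ recovers $G\preceq RI$), so the target is right. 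Your $\bfone$-direction computation checks out and in fact holds with equality, via the identity $(2+\alpha)\norm{c}_2^2/ \norm{c}_1=1+(1+\alpha)^n$ in the paper's notation, matching the paper's final expression for $\|G\|$. What your route buys is avoiding any claim about nonnegativity of $A^{1/2}\ccd A^{-1/2}$; what it costs is the second scalar condition $\tilde\tau_1\le R(1+\alpha)$, equivalently $\trace(\ccd^TA\ccd)\le n-\mu\norm{c}_2^2$, which has no counterpart in the paper's calculation.

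That last inequality is the one real gap in your write-up: you call it ``a polynomial identity \ldots verified term by term,'' but it is an inequality, not an identity, and verifying it requires computing $\|\ccd\|_F^2$ from \eqref{eq-ccd iter matrix for ex} and then establishing a sign condition on a polynomial in $\alpha$ over $(0,1/(n-1))$ for all $n$ --- nontrivial bookkeeping that you have not carried out (I confirmed it for $n=2$, where it reduces to $\alpha^2(1-\alpha^2)\le\alpha^2(1+\alpha)$). To be fair, the paper's proof has a symmetric omission (``It can be shown that $A^{1/2}\ccd A^{-1/2}$ is a non-negative matrix''), so your proposal is at a comparable level of completeness; but to make it a full proof you must either finish that trace computation or substitute the paper's nonnegativity-plus-Perron--Frobenius argument for the orthogonal-complement direction.
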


\begin{figure}[!tbp]
  \centering
  \begin{minipage}[b]{0.49\textwidth}
    \includegraphics[width=\textwidth]{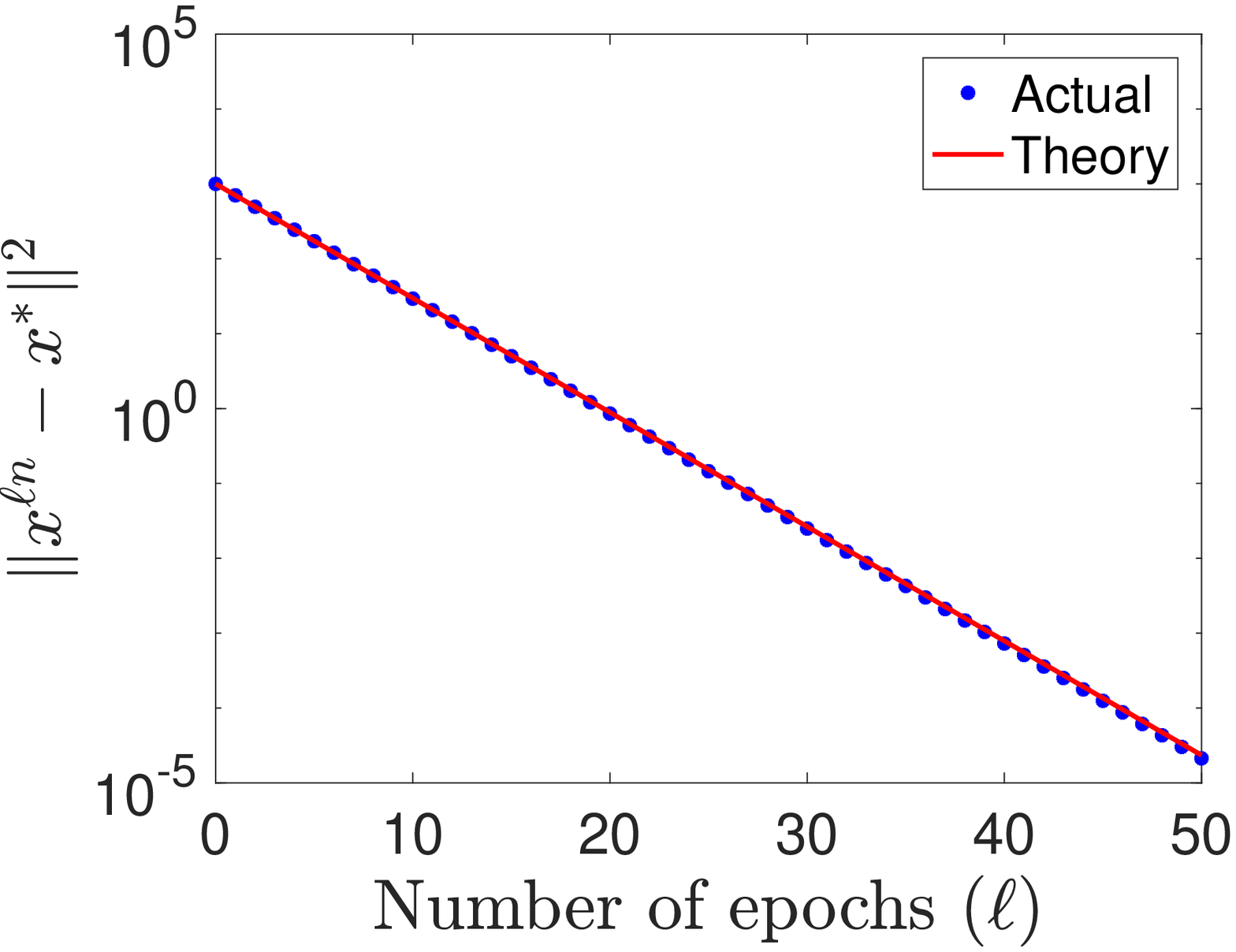}
  \end{minipage}
  \hfill
  \begin{minipage}[b]{0.49\textwidth}
    \includegraphics[width=\textwidth]{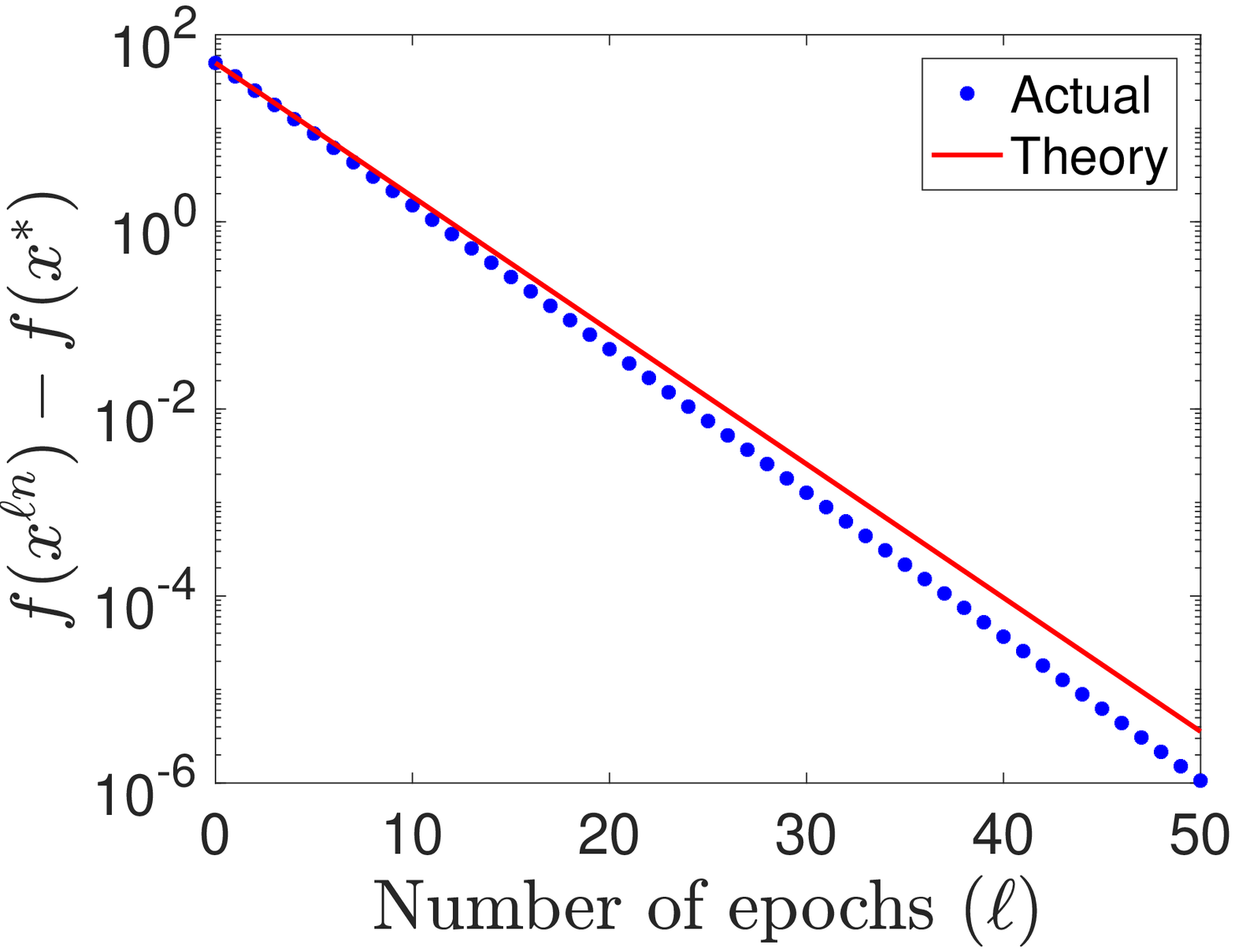}
  \end{minipage}
  \caption{Tightness of the bounds in Proposition \ref{theo-subopt-rpcd} when $n=1000$ and $\alpha=\frac{0.9}{n-1}$: Left figure for \eqref{eq:upper bound-rpcd norm square} and right figure for \eqref{eq:rpcd-f-upper-bound}.}
\end{figure}

We next compare the convergence rates we derive for the RCD and RPCD algorithms. In particular, we consider the convergence rate of both algorithms in improvement sequence $\Ly_2$ since we obtain tighter upper bounds for it. Comparing the convergence rate bounds for RCD and RPCD in \eqref{eq:upper bound-rcd norm square} and \eqref{eq:upper bound-rpcd norm square}, respectively, we can observe that RPCD is faster (in terms of the best known rate guarantees) than RCD by a factor of
\begin{equation*}
	\tilde{s}(t,n) := \frac{-\log\left( 1 - \frac{2\mu}{n} \left( \frac{(1+\alpha)^n - 1}{\alpha} \right) +  \frac{\mu^2}{n} \left( \frac{(1+\alpha)^{2n} - 1}{\alpha(\alpha+2)}\right) \right)}{-n\log\left( 1- \frac{2\mu}{n} + \frac{\mu^2}{n} \right)},
\end{equation*}
which is plotted in Figure \ref{fig:s} (right panel) in the interval $t\in(0,1)$ for different values of $n$. We observe from this figure that the convergence rate bound for RPCD is better than than the one for RCD for all $t\in(0,1)$ and $n\geq2$. Furthermore, the difference in convergence rate bounds increases as $t$ gets smaller, i.e., as the Hessian matrix becomes more diagonally dominant. We can also show that $\tilde{s}(t,n)$ behaves similar to $s(t,n)$ as $t\to 1$, where the limiting values can be found in Proposition \ref{thm-monotonic-rpcd-speedup}.

\section{Numerical Experiments}\label{sec:experiments}
Here we compare the performance of CCD, RPCD, and RCD for the quadratic problem \eqref{eq:obj1} with Hessian matrix \eqref{def-An}. In Figure \ref{fig:worst}, we use a worst-case initialization $x^0=\bfone$, for $n\in\{1000,10000\}$ and $\alpha \in\left\{ \frac{0.01}{n-1},\frac{0.50}{n-1},\frac{0.99}{n-1} \right\}$. We observe that CCD is the faster than RPCD, which is faster than RCD. This behavior is in accordance with the theoretical results in Propositions \ref{thm:ccd}-\ref{theo-subopt-rpcd}. Furthermore, as $\alpha$ decreases, we can see that the ratio between the convergence rates of RPCD and RCD increases, consistent with Proposition~\ref{thm-monotonic-rpcd-speedup} (see also Figure \ref{fig:s}). We can also observe from the right column in Figure \ref{fig:worst} that when $\alpha$ is close to $1/(n-1)$, the ratio between the convergence rates of RPCD and RCD is close to the theoretical limits obtained in Proposition~\ref{thm-monotonic-rpcd-speedup} (see part {\em (iii)}, which shows that the ratio is in the interval $[3/2,\,e-1)$). Figure \ref{fig:random} plots similar results to Figure~\ref{fig:worst}, but for a random initialization rather than worst-case initialization.  Convergence rates depicted in Figure~\ref{fig:random} are similar to those of Figure~\ref{fig:worst}, due to the fact that $x^{\ell n}$ becomes colinear with the vector of ones as $\ell$ increases (as $\bfone$ is the leading eigenvector of the expected iteration matrix), so that the worst-case convergence rate dictates the performance of the algorithms.

\begin{figure}[!tbp]
  \centering
  \begin{minipage}[b]{0.32\textwidth}
    \includegraphics[width=\textwidth]{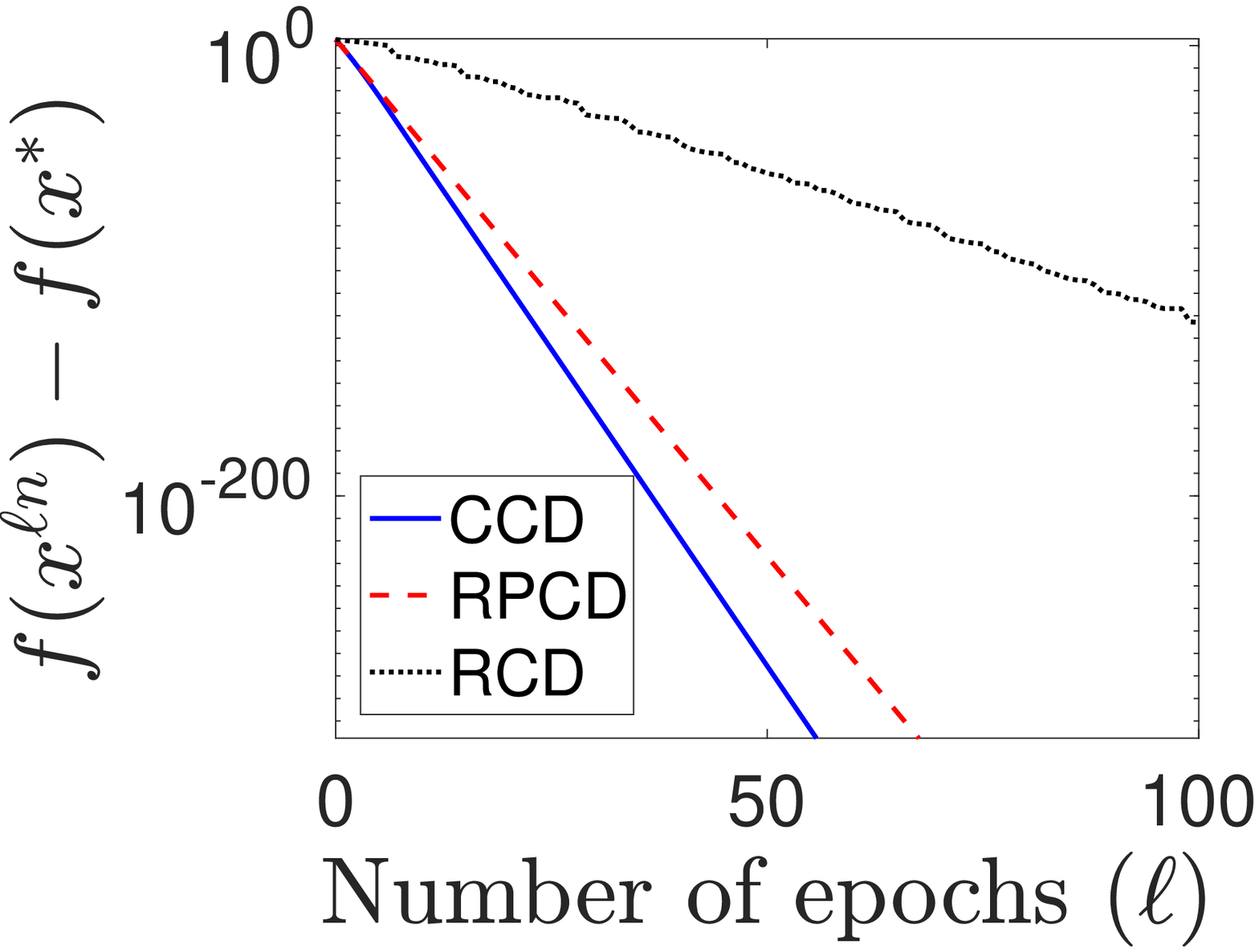}
  \end{minipage}
  \hfill
  \begin{minipage}[b]{0.32\textwidth}
    \includegraphics[width=\textwidth]{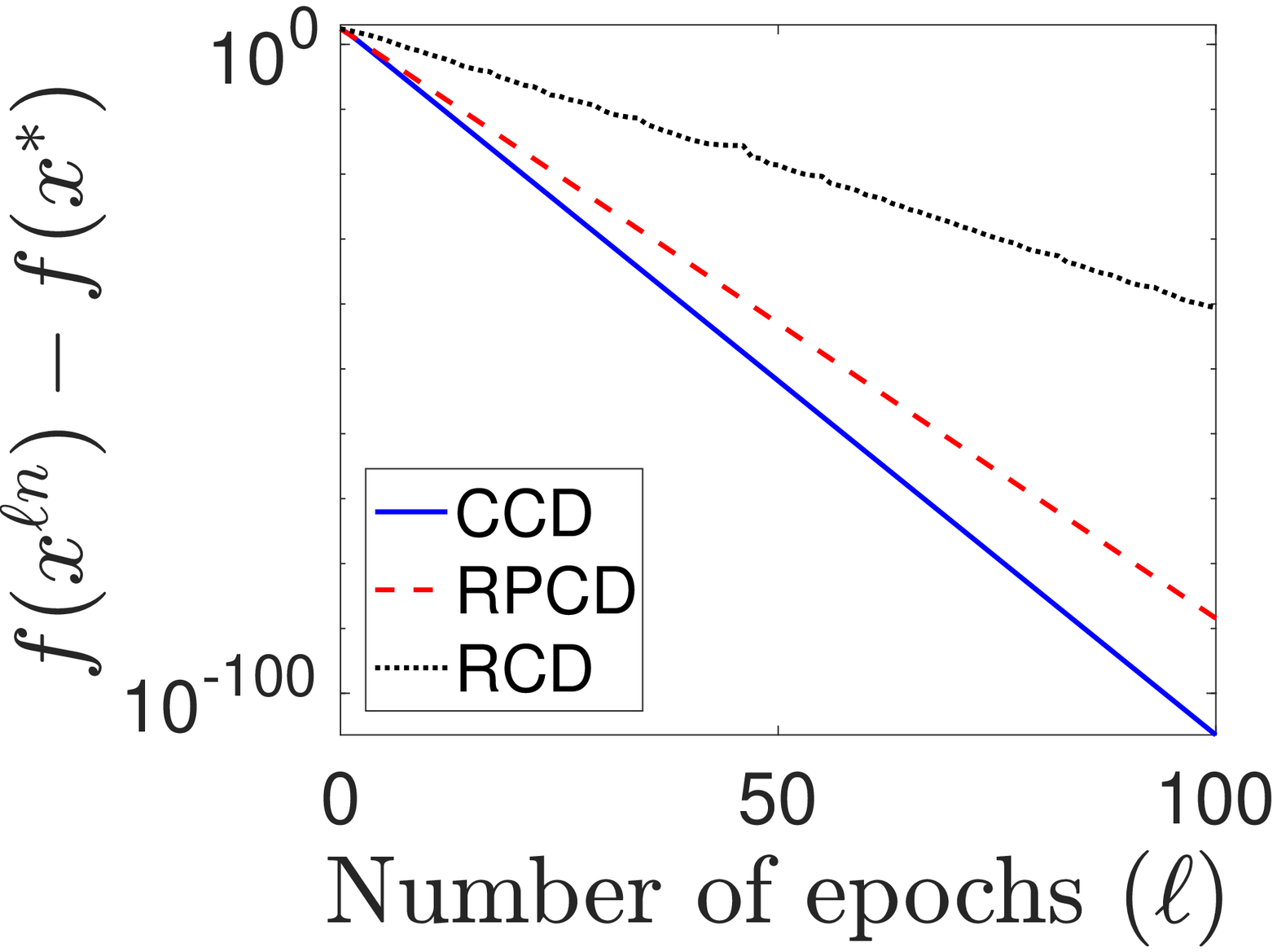}
  \end{minipage}
  \hfill
  \begin{minipage}[b]{0.32\textwidth}
    \includegraphics[width=\textwidth]{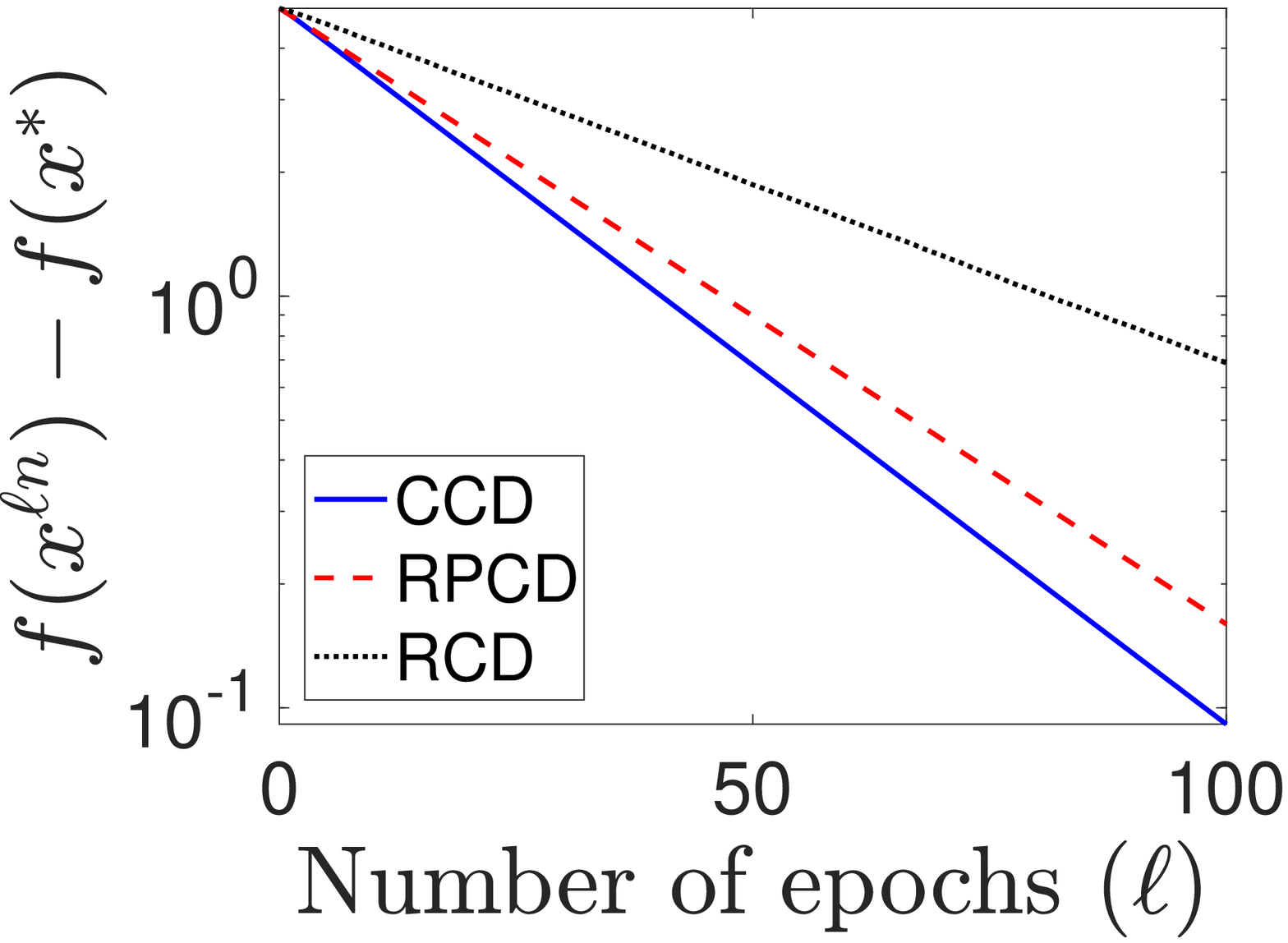}
  \end{minipage}
  \hfill
  \begin{minipage}[b]{0.32\textwidth}
    \includegraphics[width=\textwidth]{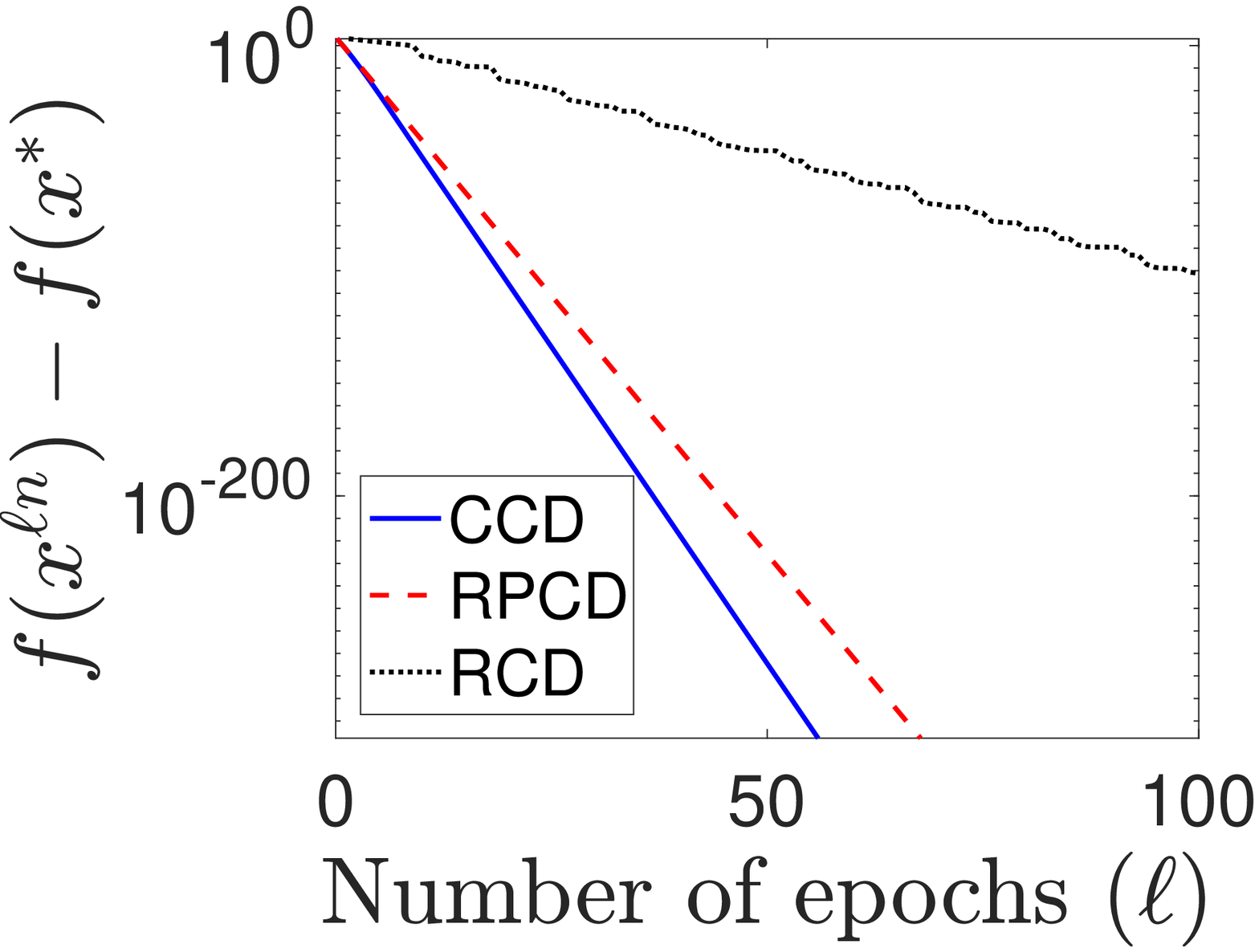}
  \end{minipage}
  \hfill
  \begin{minipage}[b]{0.32\textwidth}
    \includegraphics[width=\textwidth]{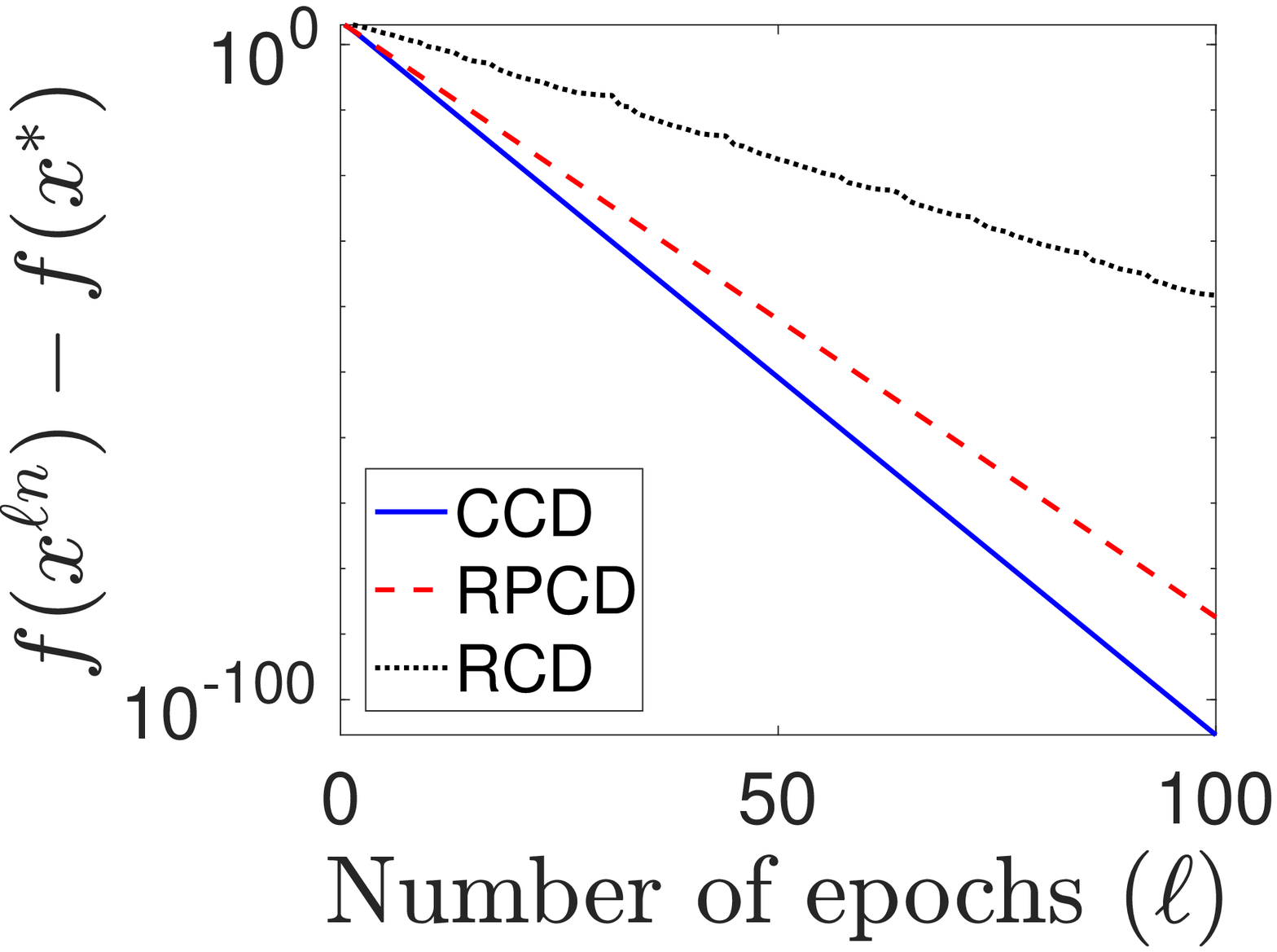}
  \end{minipage}
  \hfill
  \begin{minipage}[b]{0.32\textwidth}
    \includegraphics[width=\textwidth]{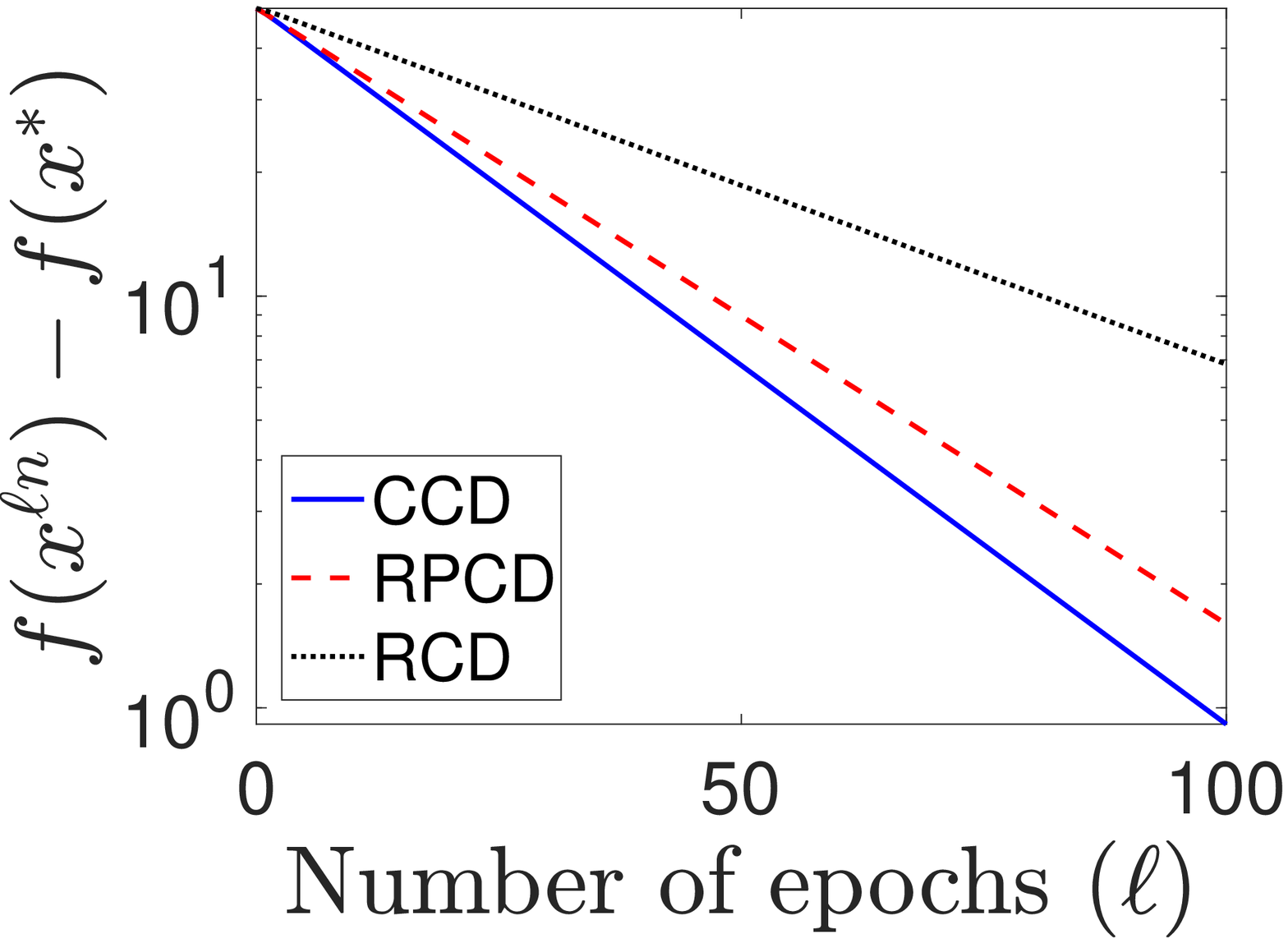}
  \end{minipage}
  \caption{CCD vs RPCD vs RCD with worst-case initialization for $n=1000$ (top row) and $n=10000$ (bottom row): $\alpha=\frac{0.01}{n-1}$ in the left column, $\alpha=\frac{0.50}{n-1}$ in the middle column, and $\alpha=\frac{0.99}{n-1}$ in the right column.}
  \label{fig:worst}
\end{figure}

\begin{figure}[!tbp]
  \centering
  \begin{minipage}[b]{0.32\textwidth}
    \includegraphics[width=\textwidth]{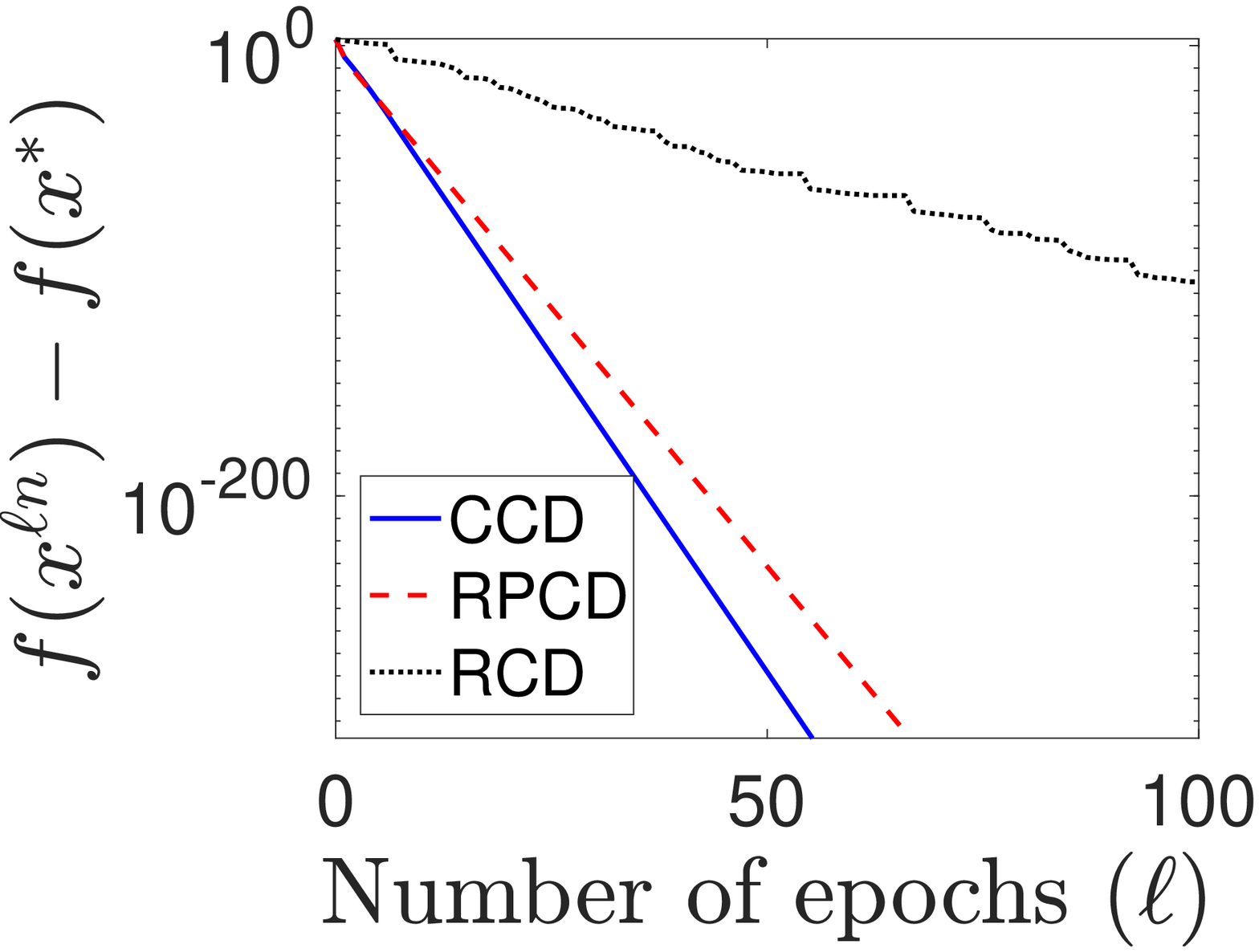}
  \end{minipage}
  \hfill
  \begin{minipage}[b]{0.32\textwidth}
    \includegraphics[width=\textwidth]{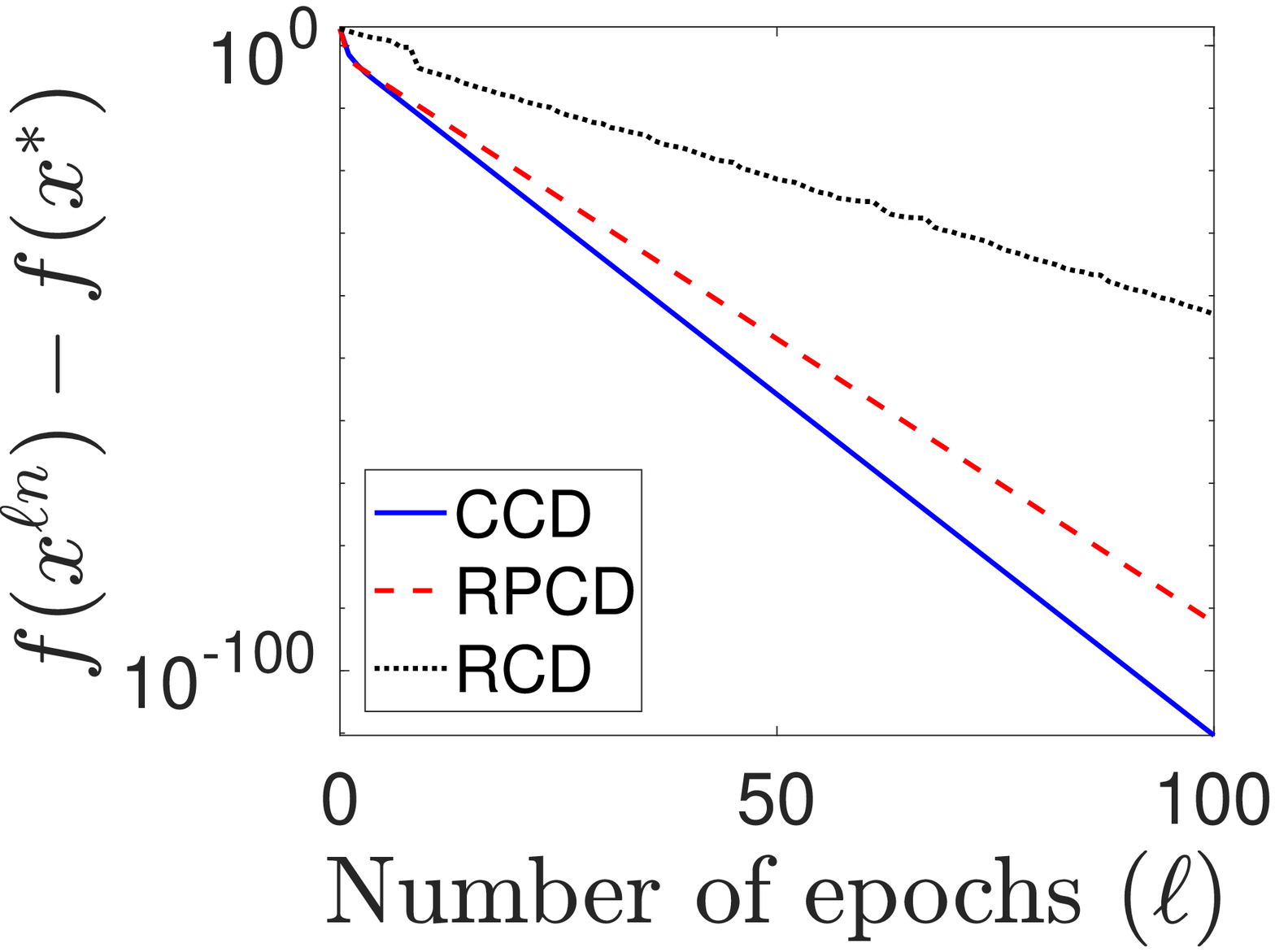}
  \end{minipage}
  \hfill
  \begin{minipage}[b]{0.32\textwidth}
    \includegraphics[width=\textwidth]{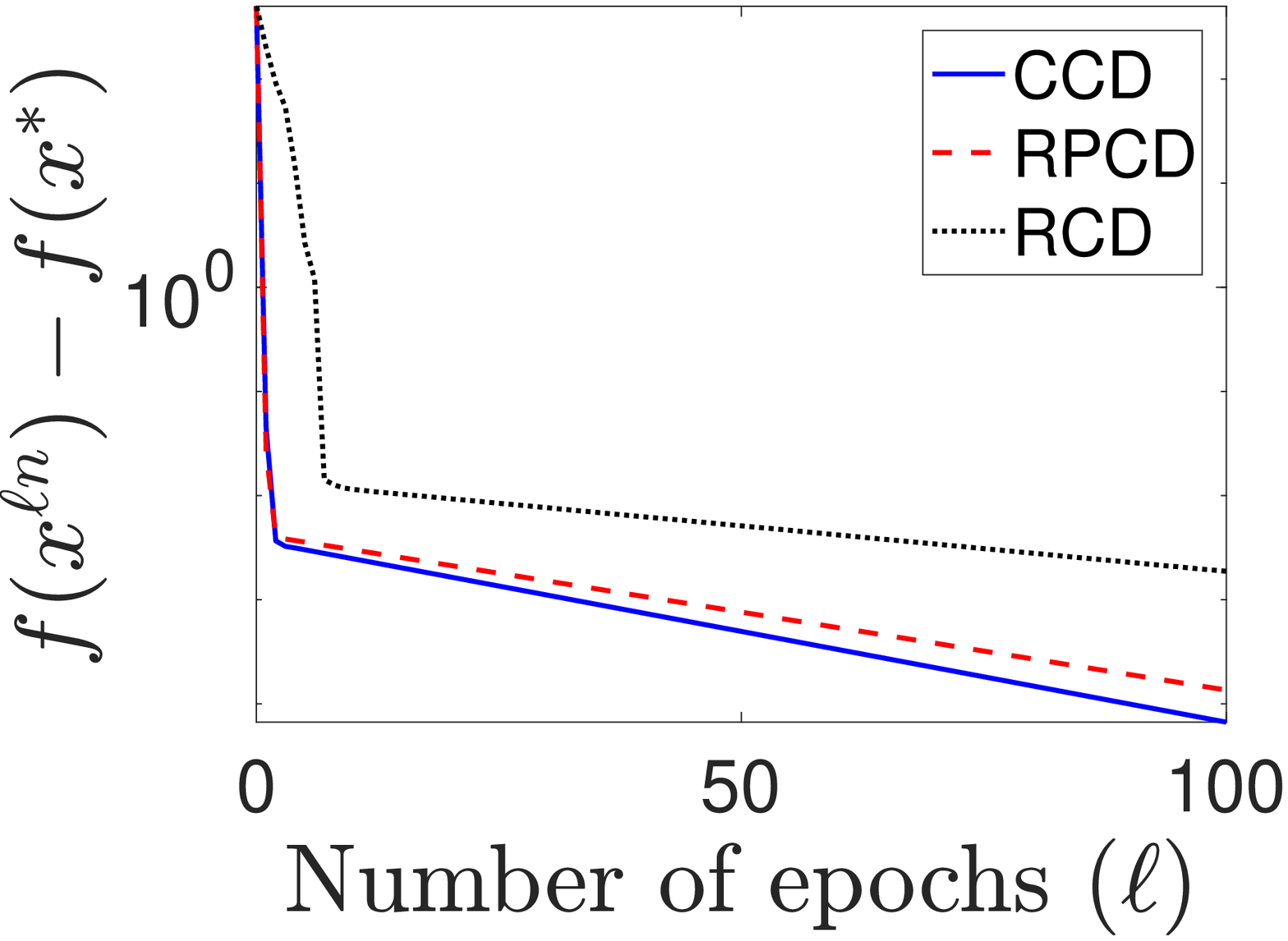}
  \end{minipage}
  \caption{CCD vs RPCD vs RCD with random initialization for $n=1000$: $\alpha=\frac{0.01}{n-1}$ (left figure), $\alpha=\frac{0.50}{n-1}$ (middle figure), and $\alpha=\frac{0.99}{n-1}$ (right figure).}
  \label{fig:random}
\end{figure}

\section{Conclusion}\label{sec:conclusion}
In this paper, we surveyed the known results on the performance of RPCD for special cases of strongly convex quadratic objectives and add to these results by presenting a class of convex quadratic problems with diagonally dominant Hessians. Using the distance of the expected iterates to the optimal solution as the convergence criterion, we compared the ratio between the performances of RPCD and RCD with respect to a parameter that represents the extent of diagonal dominance. We illustrated that as the Hessian matrix becomes more diagonally dominant, this ratio goes to infinity, whereas as it gets smaller it goes to a constant in the interval $[3/2, \, e-1)$. We also showed that CCD outperforms both RPCD and RCD for this class of problems. When expected distance of the iterates or expected function value of the iterates is used as the convergence criterion, we presented that the worst-case convergence rate bounds derived for RPCD are tighter compared to the ones for RCD. This is in accordance with our first set of results, i.e., when distance of the expected iterates is used as the convergence criterion. Computational experiments validate our theoretical results, which fill a gap between the theoretical guarantees for RPCD and its empirical performance.

\bibliographystyle{plain}
\bibliography{rpcd,nips_ref}

\appendix

\section{Proof of Lemma \ref{lemm-gamma}} \label{app:lemm-gamma-proof}
Applying Lemma \ref{lem:wright_lemma} with $Q=\Gamma^{-1}$, where $\Gamma^{-1}$ is defined in \eqref{eqn-gamma-inv}, we get
\begin{align*}
	\gamma & = \frac{\sum_{j=0}^{n-2} (n-1-j) \alpha (1+\alpha)^j}{n(n-1)}  = \frac{\alpha}{n} \sum_{j=0}^{n-2} (1+\alpha)^j - \frac{\alpha}{n(n-1)} \sum_{j=0}^{n-2} j (1+\alpha)^j \\
		& = \frac{(1+\alpha)^{n-1}-1}{n} - \frac{(1+\alpha)^{n-1}}{n} + \frac{(1+\alpha)^n-1-\alpha}{\alpha n(n-1)} 
		 = \frac{(1+\alpha)^n-\alpha n-1}{\alpha n(n-1)},
\end{align*}
where the third equality follows by the following lemma. This completes the proof.

\begin{lemma}\label{lem-sum-rule} For any real scalar $\eta \neq 1$ and integer $k\geq 0$, we have $$\sum_{j=0}^k j \eta^j  = (k+1)\frac{\eta^{k+1}}{\eta-1} - \frac{(\eta^{k+1}-1)\eta}{(\eta-1)^2}.$$
\end{lemma}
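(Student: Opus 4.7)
The plan is to derive the identity by differentiating the standard finite geometric series. Starting from the familiar formula
\begin{equation*}
\sum_{j=0}^k \eta^j \;=\; \frac{\eta^{k+1}-1}{\eta-1}, \qquad \eta \neq 1,
\end{equation*}
I would differentiate both sides with respect to $\eta$. The left-hand side becomes $\sum_{j=0}^k j\eta^{j-1}$ (the $j=0$ term vanishes), while the right-hand side yields, via the quotient rule,
\begin{equation*}
\frac{(k+1)\eta^{k}(\eta-1) - (\eta^{k+1}-1)}{(\eta-1)^2}.
\end{equation*}
Multiplying both sides by $\eta$ then gives $\sum_{j=0}^k j\eta^j$ on the left, and algebraic simplification on the right produces the two-term expression claimed in the lemma.

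A clean alternative, if one prefers to avoid calculus, would be to proceed by induction on $k$. The base case $k=0$ is a direct check: both sides equal zero after cancellation. For the inductive step, one adds $(k+1)\eta^{k+1}$ to the expression at step $k$ and combines the resulting fractions over the common denominator $(\eta-1)^2$; after expanding $(k+1)\eta^{k+1}(\eta-1)^2$ and collecting terms, the telescoping of $(k+1)\eta^{k+1}/(\eta-1)$ against the added term shifts the index from $k$ to $k+1$ and leaves the $(\eta^{k+2}-1)\eta/(\eta-1)^2$ piece intact.

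Neither route presents a genuine obstacle: these are standard manipulations of the geometric series. The only place where one must be careful is the algebraic simplification after differentiation (or during the inductive step), where it is easy to mis-track the factor of $\eta$ and the shift between $\eta^k$ and $\eta^{k+1}$ in the numerator. I would therefore proceed by the differentiation approach, since it is the shortest, and verify the final simplification by expanding $(k+1)\eta^{k+1}(\eta-1) - \eta(\eta^{k+1}-1)$ and splitting the resulting fraction into the two displayed summands, matching term-by-term with the stated identity.
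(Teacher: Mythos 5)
Your differentiation argument is correct and is exactly the route the paper takes: differentiate $\sum_{j=0}^k \eta^j = (\eta^{k+1}-1)/(\eta-1)$, multiply by $\eta$, and split the resulting fraction into the two displayed terms. The final simplification checks out, since $\eta\cdot\frac{(k+1)\eta^k(\eta-1)-(\eta^{k+1}-1)}{(\eta-1)^2} = (k+1)\frac{\eta^{k+1}}{\eta-1}-\frac{(\eta^{k+1}-1)\eta}{(\eta-1)^2}$.
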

\begin{proof}[Lemma \ref{lem-sum-rule}]
Consider the cumulative sums  $u_k(\eta) :=  \sum_{j=0}^k \eta^j = \frac{\eta^{k+1}-1}{\eta - 1}$. It is easy to see that $ \sum_{j=0}^k j \eta^j = \eta u_k'(\eta) $ where $u_k'(\eta)$ is the derivative of $u_k(\eta)$. Differentiating the right-hand side of the formula for $u_k$ yields the result.
\end{proof}

\section{Proof of Proposition \ref{thm-monotonic-rpcd-speedup}}
\begin{proof}[Part (i)]: Defining $h(t,n) = \frac{ \left( 1+ \frac{t}{n-1} \right)^n - 1} {\frac{t}{n-1}}$, where $t\in(0,1)$ and $n\geq1$ is an integer, we have by the definition in \eqref{eq:s-definition} that $s(t,n) = \rho_1(t,n)/\rho_2(t,n)$, where
\begin{equation*}
	\rho_1(t,n) = -\log \left (1 - \frac{1-t}{n} h(t,n)  \right) \quad \mbox{and} \quad \rho_2(t,n) =  -n \log \left (1 - \frac{1-t}{n}  \right).
\end{equation*}
Throughout the rest of the proof, for simplicity, whenever the dependence of $h$, $\rho_1$ and $\rho_2$ on $n$ is clear, we will abbreviate them by $h(t)$, $\rho_1(t)$ and $\rho_2(t)$, respectively. Similarly, whenever the dependence on $t$ is also clear, we will abbreviate them by $h$, $\rho_1$ and $\rho_2$, respectively. In order to prove statement {\em (i)} of Proposition \ref{thm-monotonic-rpcd-speedup}, it suffices to show that the partial derivative satisfies
\begin{equation*}
	\partial_t s(t,n) = \frac{\partial_t (\rho_1) \rho_2 - \rho_1 \partial_t(\rho_2)}{\rho_2^2} < 0,
\end{equation*}
for all $t \in (0,1)$. This holds if and only if
\begin{equation} \label{eq:prop1-step1}
	\frac{\partial_t (\rho_1)}{\rho_1} <  \frac{\partial_t (\rho_2)}{\rho_2} \quad \iff \quad \partial_t \left( \log  \rho_1 \right) < \partial_t \left( \log \rho_2 \right),
\end{equation}
for all $t \in (0,1)$, where we used the fact that $\rho_1$ and $\rho_2$ are positive for $t\in (0,1)$. We can compute these partial derivatives in the right-hand side as follows
\begin{equation*}
	\partial_t \left( \log  \rho_1 \right) = \frac{1}{\rho_1} \partial_t(\rho_1) = \frac{-1}{\rho_1} \left( \frac{1}{1 - \frac{1-t}{n} h(t)} \right) \left( \frac{h(t) + h'(t) (t-1)}{n} \right),
\end{equation*}
and similarly 
\begin{equation*}
	\partial_t \left( \log  \rho_2 \right) = \frac{1}{\rho_2} \partial_t(\rho_2) = \frac{-1}{\rho_2} \left( \frac{1}{1 - \frac{1-t}{n}} \right).
\end{equation*}
Hence, in order to prove \eqref{eq:prop1-step1}, it is sufficient to show that
\begin{equation*}
	\frac{1}{\rho_1} \left( \frac{1}{1 - \frac{1-t}{n} h(t)} \right)  g(t)  > \frac{1}{\rho_2} \left( \frac{1}{1 - \frac{1-t}{n}} \right), \quad \mbox{where}  \quad q(t):=\frac{h(t) + h'(t) (t-1)}{n},
\end{equation*}
which, after inserting the formulas for $\rho_1$ and $\rho_2$, is equivalent to
\begin{equation}\label{eq:prop1-step2}
	-n \log\left(1 - \frac{1-t}{n}\right) \left(1 - \frac{1-t}{n}\right) q(t) > -\log\left(1 - \frac{1-t}{n}h\right) \left(1 - \frac{1-t}{n}h\right),
\end{equation}
for $t\in (0,1)$. The main ingredients to prove this inequality is to approximate the non-linear functions $q$ and $h$ with piecewise linear functions, which are easier to deal with, in other words, linearizing $q$ and $h$ above leads to simpler expressions for the derivatives of both sides of this inequality. In order to approximate $q$, we first write a binomial expansion for $h(t)$ as follows
\begin{equation*}
	h(t) = \frac{ \left( 1+ \frac{t}{n-1} \right)^n - 1} {\frac{t}{n-1}} = \sum_{i=1}^n \binom ni \left( \frac{t}{n-1} \right)^{i-1}.
\end{equation*}
This implies that $q(t)$ is of the form $ q(t) = \frac{1}{2} + \frac{2}{3}t + \sum_{j=2}^{n-1} c_j t^j $, where $c_2 > 0$ and $c_j \geq 0$, for all $j\in\{3,\dots,n-1\}$. Therefore, the first and second derivatives of $g$ are positive over $t\in(0,1)$ and $g$ is strictly convex. We then consider linearizations of $q(t)$ at $t=0$ and $t=1$, which are given by
\begin{equation*}
	q_0(t) =  \frac{1}{2} + \frac{2}{3}t \quad \mbox{and} \quad q_1(t) = \frac{h(1) - 2(n-1)(1-t)}{n}.
\end{equation*}
(Note that in the special case $n=2$, $q(t)$ is linear so that $q_0(t) = q_1(t)$ for all $t$. However, for $n>2$, $q_0 \neq q_1$). In particular, it can be checked that $q_0(\hat{t}) = q_1(\hat{t})$, for $\hat{t} = 1 - \frac{6h(1) - 7n}{4(2n-3)}$. Since $q(t)$ is convex,
\begin{equation}\label{def-g-lower-bound}
	q(t) \geq \underline{q}(t) = \max\left(q_0(t), q_1(t)\right) = 
    \begin{cases}
    	q_0(t),  &\mbox{if } t\in [0, \hat{t}), \\
		q_1(t),  & \mbox{if } t \in [\hat{t},1].
	\end{cases}
\end{equation}
The right-hand side of $\eqref{eq:prop1-step2}$ is of the form
	\beq\label{def-z} z(t) = -\log\left (y(t)\right) y(t) = E(y(t)), \quad \mbox{where} \quad y(t)=1 - \frac{1-t}{n}h, \quad E(y)= -\log(y) y.\eeq
As $h$ is convex, we have the bounds
	\beq\label{def-y-bar}\hub(t) = (1-t) h(0) + t h(1) \geq h(t) \quad \mbox{and} \quad y(t) \geq \yub(t)=1 - \frac{1-t}{n}\hub, \quad t\in (0,1).
    \eeq
Using the facts that the function $E(\cdot)$ has a maximum of $1/e$ over the interval $[0,1]$ and is strictly decreasing over the interval $(1/e,1]$, it follows from \eqref{def-y-bar} that
  \beq\label{def-z-upper-bound} E(y(t)) =z(t) \leq \overline{z}(t) := \begin{cases} 
  E(\bar{y}(t))          & \mbox{if} \quad \yub \in (1/e,1] \iff t \in (t_*,1]	 \\
  	1/e 			& \mbox{if} \quad \yub \in [0,1/e] \iff t\in  [0,t_*]
  	\end{cases} \eeq
where $t_*$ is the largest $t\in(0,1)$ such that $\yub(t)=1/e$ and admits the formula 
	$$ t_* = -\frac{1}{2}\frac{2n-h(1)}{h(1)-n} + \frac{1}{2}\sqrt{ 
    \left(\frac{2n-h(1)}{h(1)-n}\right)^2 + \frac{4}{e}\frac{n}{h(1)-n}}.$$
Combining the lower bound \eqref{def-g-lower-bound} on $q(t)$ and the upper bound \eqref{def-z-upper-bound} on $z(t)$, a sufficient condition for \eqref{eq:prop1-step2} is to show that the following relaxed inequality holds
 	\begin{equation}\label{ineq-to-prove}
	-n \log\left(1 - \frac{1-t}{n}\right) \left(1 - \frac{1-t}{n}\right) \qlb(t) - \zub (t) > 0, \quad \mbox{for all} \quad t \in (0,1).
\end{equation}
The left-hand side is a piecewise continuously differentiable function (pieces defined by the intervals $[0,\hat{t}]$, $(\hat{t}, t_*]$ and $(t_*,1]$)) and it is positive at $t=0$. The rest of the proof is about showing that the left-hand side in \eqref{ineq-to-prove} stays positive for $t\in(0,1)$, this is achieved by computing and lower bounding the first order derivatives of the left-hand side. The details are skipped due to space considerations and follows from standard calculus techniques.

\end{proof}

\begin{proof}[Part (ii)]:
Since $\lim_{t\to 0^+} \rho_2(t) = -n \log(1-1/n)$, whereas $\lim_{t\to 0^+} \rho_1(t) = -\log(1-h(0)/n)=\infty$ as $h(0)=n$, we obtain $\lim_{t\to 0^+} s(t,n) = \lim_{t\to 0} \left(\rho_1(t)/\rho_2(t)\right) = \infty$.
\end{proof}

\begin{proof}[Part (iii)]:
We observe that
$	g(n) =  \lim_{t\to 1^-} \frac{\rho_1(t)}{\rho_2(t)} =  \lim_{t\to 1^-} \frac{\rho'_1(t)}{\rho'_2(t)},
$
since $\lim_{t\to 1^-} \rho_1(t) = \lim_{t\to 1^-} \rho_2(t) = 0$. The derivatives of $\rho_1(t)$ and $\rho_2(t)$ with respect to $t$ are given by
\begin{equation*}
	\rho'_1(t) = - \frac{h(t) + h'(t) (t-1)}{n-(1-t)h(t)} \quad \mbox{and} \quad \rho'_2(t) = -\frac{n}{n-(1-t)}.
\end{equation*}
Therefore, we obtain
\begin{equation*}
	g(n) = \lim_{t\to 1^-} \frac{\frac{h(t) + h'(t) (t-1)}{n-(1-t)h(t)}}{\frac{n}{n-(1-t)}} = \frac{h(1)}{n} = \left( 1+ \frac{1}{n-1} \right)^{n-1} + \frac{1}{n} - 1.
\end{equation*}
In order to show that $g(n)$ is strictly increasing in $n$, consider the extension of $g$ to the positive real line, i.e., consider the function $\bar{g}(z) = \left(1+\frac{1}{z}\right)^{z} + \frac{1}{z+1} - 1$, where $z\geq0$. Taking its derivative with respect to $z$, we get
\begin{equation*}
	\bar{g}'(z) = \left( \log\left(1+\frac{1}{z}\right) - \frac{1}{z+1} \right) \left(1+\frac{1}{z}\right)^{z} - \frac{1}{(z+1)^2}.
\end{equation*}
Using the lower bounds $\log(1+y) \geq \frac{2y}{2+y}$ for $y\geq0$ and $(1+1/y)^y \geq 2$ for $y\geq1$, we obtain
\begin{equation*}
	\bar{g}'(z) \geq 2 \left( \frac{2}{2z+1} - \frac{1}{z+1} \right) - \frac{1}{(z+1)^2} = \frac{1}{(z+1)(z+1/2)}- \frac{1}{(z+1)^2} > 0,
\end{equation*}
for any $z\geq1$. Consequently, $g(n)$ is strictly increasing in $n\geq2$. Furthermore, it follows directly from the definition that $g(2) = 3/2$ and since $\lim_{n\to\infty}(1+1/n)^n = e$, we get $\lim_{n\to\infty} g(n) = e-1$. This completes the proof of part $(iii)$.
\end{proof}

\section{Proof of Proposition \ref{thm:ccd}} \label{app:ccd}
The proof of $\rho(\rpcdE)<\rho(\rcdE)^n$ follows by Proposition \ref{thm-monotonic-rpcd-speedup}, hence is omitted. Since the off-diagonal entries of $A$ are nonpositive and $A$ is a positive definite matrix, then it follows by \cite[Theorem 4.12]{ccd_vs_rcd} that $\rho(\ccd) \leq \frac{1-\mu}{1+\mu} = 1-\frac{2\mu}{1+\mu}$, where $\mu=1-(n-1)\alpha$. On the other hand, from \eqref{eq-rho-RPCD}, we have $\rho(\rpcdE) = 1 - \mu \frac{(1+\alpha)^n  - 1}{n\alpha}$. Hence, in order to show that $\rho(\ccd) < \rho(\rpcdE)$, for all $\alpha\in(1,1/(n-1))$ and $n\geq2$, it suffices to show
\begin{equation*}
	\frac{2}{1+\mu} > \frac{(1+\alpha)^n  - 1}{n\alpha} \quad \iff \quad \frac{1}{1-\frac{(n-1)\alpha}{2}} > \frac{(1+\alpha)^n  - 1}{n\alpha}.
\end{equation*}
Since $\alpha\in(1,1/(n-1))$, it is sufficient to show that
\begin{equation}\label{eq:ccd_req}
	n\alpha > \left( 1-\frac{(n-1)\alpha}{2} \right) \left( (1+\alpha)^n  - 1 \right).
\end{equation}
Using the Binomial expansion $(1+\alpha)^n = \sum_{j=0}^n {n \choose j} \alpha^j$, we get
\begin{align*}
\small
    \left( 1-\frac{(n-1)\alpha}{2} \right) \left( (1+\alpha)^n  - 1 \right) & = \sum_{j=1}^n {n \choose j} \alpha^j - \frac{n-1}{2} \sum_{j=1}^n {n \choose j} \alpha^{j+1} \\
    	& < \sum_{j=1}^n {n \choose j} \alpha^j - \frac{n-1}{2} \sum_{j=1}^{n-1} {n \choose j} \alpha^{j+1} \\
    	& = n\alpha + \sum_{j=2}^n \left( {n \choose j} - \frac{n-1}{2} {n \choose j-1} \right) \alpha^j,
\end{align*}
where the inequality follows since we omit the last term of the second sum and the last equality follows by peeling out the first entry of the first sum. We can observe that
\begin{equation*}
	{n \choose j} - \frac{n-1}{2} {n \choose j-1} = \left( \frac{n+1-j}{j} - \frac{n-1}{2} \right) {n \choose j-1} = \left( \frac{(n+1)(2-j)}{2j} \right) {n \choose j-1} \leq 0,
\end{equation*}
for all $j\in\{2,\dots,n\}$. This proves \eqref{eq:ccd_req}, which concludes the proof.

\section{Proof of Proposition \ref{lemma-rcd-upper bound}} \label{app:rcd-bound}
RCD iterations can be written (by \eqref{eq:rcd-definition}) as follows
\begin{equation*}
	\xr^{k+1} = \left( I - e_{i_k} e_{i_k}^T A\right) \xr^k,
\end{equation*}
where $i_k$ is drawn uniformly at random from the set $\{1,2,\dots,n\}$. Letting $\E_k$ denote the expectation with respect to $i_k$ given $x_k$ and taking norm squares of both sides, we obtain
\begin{align*}
	\E_k \|\xr^{k+1}\|^2 & = (\xr^k)^T \, \E_k \left[\left( I - A^T e_{i_k} e_{i_k}^T \right) \left( I - e_{i_k} e_{i_k}^T A\right)\right] \xr^k \\
		& =  (\xr^k)^T \left( \frac{1}{n} \sum_{i=1}^n \left( I - A^T e_{i} e_{i}^T - e_{i} e_{i}^T A + A^T e_{i} e_{i}^T A \right) \right) \xr^k \\
		& = (\xr^k)^T \left( I - \frac{2A}{n} + \frac{A^2}{n} \right) \xr^k \leq \norm{Q} \|\xr^k\|^2 \mbox{ with } Q := I - \frac{2A}{n} + \frac{A^2}{n},
\end{align*}
where we used the fact that $A=A^T$ and $\sum_{i=1}^n e_ie_i^T = I$. Using this recursion and noting that $x^*=0$, we get
\begin{equation} \label{eq-example-rcd-decay}
	\E \| \xr^{k+1} - x^* \|^2 \leq \| Q \|^k \, \| x^0 - x^*\|^2.
\end{equation}
The eigenvalues of $Q$ are of the form $1-2\lambda/n + \lambda^2/n$, where $\lambda$ is an eigenvalue of $A$. Since $Q$ is symmetric and $A$ has only two distinct eigenvalues that are equal to $\mu = (1-\alpha (n-1))$ and $L=1+\alpha$, we obtain
\begin{align}
	\| Q \| & = \max \{1-2\mu/n + \mu^2/n, 1-2L/n + L^2/n\}
		 = 1-2\mu/n + \mu^2/n. \label{eq-Q-norm}
\end{align}
Using \eqref{eq-Q-norm} in \eqref{eq-example-rcd-decay} concludes the proof of \eqref{eq:upper bound-rcd norm square}. The proof of \eqref{eq:upper bound-rcd subopt} can be done by following similar lines to the above proof as follows
\begin{align*}
	f(\xr^{k+1}) & = (\xr^k)^T \, \E_k \left[\left( I - A^T e_{i_k} e_{i_k}^T \right) A \left( I - e_{i_k} e_{i_k}^T A\right)\right] \xr^k \\
		& =  (\xr^k)^T \, \E_k \left[ A - A^T e_{i_k} e_{i_k}^T A - A e_{i_k} e_{i_k}^T A + A^T e_{i_k} e_{i_k}^T A e_{i_k} e_{i_k}^T A \right] \xr^k \\
		& =  (\xr^k)^T \, \E_k \left[ A - A e_{i_k} e_{i_k}^T A \right] \xr^k \\
		& =  (\xr^k)^T \, \left( A - \frac{A^2}{n} \right) \xr^k 
		 \leq \norm{I - \frac{A}{n}} f(\xr^k) 
		 = \left( 1 - \frac{\mu}{n} \right) f(\xr^k),
\end{align*}
where in the third equality, we use the fact that $A=A^T$ and $e_i^T A e_i=1$, for all $i\in[n]$, and in the fourth equality, we use $\sum_{i=1}^n e_ie_i^T = I$, respectively. This concludes the proof.

\vspace{-.5cm}
\section{Proof of Proposition \ref{theo-subopt-rpcd}} \label{app:pot6}
RPCD iterations can be written (by \eqref{eq:rpcd}) as follows
\begin{equation*}
	\xp^{(\ell+1) n} = P_{\pi_\ell} \ccd P_{\pi_\ell}^T \, \xp^{\ell n}. 
\end{equation*}
Considering improvement sequence $\Ly_2$, this yields
\begin{equation*}
	\E_\ell \| \xp^{(\ell+1) n} \|^2 = (\xp^{\ell n})^T \E_P [ P \ccd^T \ccd P^T ] \xp^{\ell n} \leq \| S \| \| \xp^{\ell n} \|^2,
\end{equation*}
where $S=\E_P [P \ccd^T \ccd P^T]$. Using this recursion, we obtain
\begin{equation*}
	\E \| \xp^{\ell n} \|^2 \leq \| S \|^\ell \big\| \xp^0 \big\|^2.
\end{equation*}
The contraction factor $\| S \|$ can be computed by applying Lemma \ref{lem:wright_lemma} with $Q = \ccd^T \ccd$, which yields
\begin{equation} \label{eq:expected iterate square for rpcd}
	S=\E_P[P \ccd^T \ccd P^T]= \tau_1 I + \tau_2 \bfone\bfone^T, 
\end{equation}
where 
\begin{equation*}
	\tau_2 = \frac{\bfone^T \ccd^T \ccd \bfone - \trace(\ccd^T \ccd)}{n(n-1)} \quad \mbox{and} \quad \tau_1 = \frac{\trace (\ccd^T \ccd)}{n} - \tau_2.
\end{equation*}
Since $S$ is a symmetric matrix, we have $\| S \| = \rho(S)$. Furthermore, we can observe that $\ccd^T \ccd$ has strictly positive entries both in its diagonals and off-diagonals, consequently we have $S>0$. Then, by Perron-Frobenius Theorem \cite[Lemma 2.8]{varga2009matrix}, we have
\begin{equation} \label{eq:rho-S}
	\| S \| = \rho(S) = \tau_1 + n\tau_2 = \frac{1}{n} \bfone^T S \bfone.
\end{equation}
In order to compute \eqref{eq:rho-S}, we first compute the matrix $\ccd$ as follows
\begin{equation}\label{eq-ccd iter matrix for ex} 
	\ccd = I - \Gamma^{-1}A = \begin{cases}
            \alpha \left((1+\alpha)^{i-1}- (1+\alpha)^{i-j}\right), & \mbox{if} \quad i \geq j, \\
            \alpha(1+\alpha)^{i-1}, & \mbox{if} \quad i<j.
        \end{cases}
\end{equation}
Combining \eqref{eq:rho-S} and \eqref{eq-ccd iter matrix for ex}, we obtain
\begin{equation*}
	\| S \| = \frac{1}{n}\bfone^T \ccd^T \ccd \bfone = \frac{1}{n} \| \ccd \bfone \|^2  = \frac{1}{n} \sum_{i=1}^n \left( (\ccd\bfone)_i \right)^2,
\end{equation*}
where
\begin{equation}\label{eq-rowsum}
	(\ccd\bfone)_i = 1 - \mu (1+\alpha)^{i-1}.
\end{equation}
This yields
\begin{align*}
	\| S \| & = \frac{1}{n} \sum_{i=1}^n \left( 1 - 2 \mu (1+\alpha)^{i-1} + \mu^2 (1+\alpha)^{2(i-1)} \right) = 1 - \frac{2\mu}{n} \left( \frac{(1+\alpha)^n - 1}{\alpha} \right) +  \frac{\mu^2}{n} \left( \frac{(1+\alpha)^{2n} - 1}{\alpha(\alpha+2)}\right),
\end{align*}
which proves \eqref{eq:upper bound-rpcd norm square}.


We next prove the results regarding the function suboptimality in \eqref{eq:rpcd-f-upper-bound}. To this end, we consider the expected function sub-optimality (note that $f(x^*)=0$), which yields
\begin{align*}
	\E_\ell f(\xp^{(\ell+1) n}) & = (\xp^{\ell n})^T \E_P [P \ccd^T P^T A P \ccd P^T] \xp^{\ell n} \\
    	& = (\xp^{\ell n})^T \E_P [P \ccd^T A \ccd P^T] \xp^{\ell n} \\
    	& \leq \| \E_P [A^{-1/2} P \ccd^T A \ccd P^T A^{-1/2}] \| \| A^{1/2} \xp^{\ell n} \|^2 \\
    	& = \| \E_P [A^{-1/2} P \ccd^T A \ccd P^T A^{-1/2}] \| \, f(\xp^{\ell n}) \\
    	& = \| \E_P [P A^{-1/2} \ccd^T A \ccd A^{-1/2} P^T] \| \, f(\xp^{\ell n}) \\
        & = \| G \| \, f(\xp^{\ell n}),
\end{align*}
where $G:=\E_P [PA^{-1/2} \ccd^TA \ccd A^{-1/2}P^T]$ and the equalities follow since $A$ and $A^{-1/2}$ are symmetric matrices. It can be shown that $A^{1/2} \ccd A^{-1/2}$ is a non-negative matrix, hence applying Lemma \ref{lem:wright_lemma} to the matrix $Q = A^{-1/2} \ccd^T A \ccd A^{-1/2}$, it can be shown (similar to the previous proof) that
\begin{equation} \label{eq:norm-G}
	\|G\| = \rho(G) = \frac{1}{n} \| A^{1/2} \ccd A^{-1/2} \bfone \|^2 = \frac{1}{n} \| \bfone - A^{1/2} \Gamma^{-1} A^{1/2} \bfone \|^2,
\end{equation}
where $A^{1/2} = \gamma I - \sigma \bfone \bfone^T$ with $\gamma = \sqrt{1+\alpha}$ and $\sigma=(\gamma-\sqrt{\mu})/n$. This yields
$A^{1/2} \bfone = (\gamma-n\sigma) \bfone = \sqrt{\mu} \bfone$. Multiplying both sides of the above equality by $\Gamma^{-1}$ from the left, we obtain
\begin{equation} \label{eq:app6-step1}
	\Gamma^{-1} A^{1/2} \bfone = \sqrt{\mu} \, c,
\end{equation}
where it follows from \eqref{eqn-gamma-inv} that
\begin{equation*}
	c =
    \begin{bmatrix}
		1 \\ 1+\alpha \\ 1+\alpha+\alpha(1+\alpha) \\ \vdots \\ 1+\alpha+\alpha(1+\alpha)+\dots+\alpha(1+\alpha)^{n-2}
	\end{bmatrix}
    =
    \begin{bmatrix}
		1 \\ 1+\alpha \\ (1+\alpha)^2 \\ \vdots \\ (1+\alpha)^{n-1}
	\end{bmatrix}.
\end{equation*}
Multiplying \eqref{eq:app6-step1} from the left by $A^{1/2}$, we get
\begin{equation} \label{eq:app6-step2}
	A^{1/2} \Gamma^{-1} A^{1/2} \bfone = \sqrt{\mu} \left( \gamma c - \sigma \norm{c}_1 \bfone \right), \quad \mbox{where} \quad \norm{c}_1 = \frac{(1+\alpha)^n-1}{\alpha}.
\end{equation}
Using \eqref{eq:app6-step2} in \eqref{eq:norm-G}, we obtain
\begin{align}
	\norm{G} & = \frac{1}{n} \sum_{i=1}^n \left( 1 - \sqrt{\mu} \left( \gamma c_i - \sigma \norm{c}_1 \right) \right)^2 \nonumber= 1 - \frac{2\sqrt{\mu}}{n} \sum_{i=1}^n \left( \gamma c_i - \sigma \norm{c}_1 \right) + \frac{\mu}{n} \sum_{i=1}^n \left( \gamma c_i - \sigma \norm{c}_1 \right)^2  \nonumber\\
    	& = 1 - \frac{2\sqrt{\mu}}{n} \left( \gamma - n \sigma \right) \norm{c}_1 + \frac{\mu}{n} \sum_{i=1}^n \left( \gamma^2 c_i^2 - 2\gamma \sigma \norm{c}_1 c_i + \sigma^2 \norm{c}_1^2 \right)  \nonumber\\
    	& = 1 - \frac{2\mu}{n} \norm{c}_1 + \frac{\mu}{n} \left( \gamma^2 \norm{c}_2^2 - 2\gamma \sigma \norm{c}_1^2 + n \sigma^2 \norm{c}_1^2 \right), \label{eq:app6-step3}
\end{align}
where
\begin{equation*}
	\norm{c}_2^2 = \frac{(1+\alpha)^{2n}-1}{\alpha(\alpha+2)} \quad \mbox{and} \quad \norm{c}_1^2 = \frac{(1+\alpha)^{2n}-2(1+\alpha)^n+1}{\alpha^2}.
\end{equation*}
Modifying the terms in \eqref{eq:app6-step3}, we get
\begin{align*}
	\norm{G} & = 1 - \frac{2\mu}{n} \norm{c}_1 + \frac{\mu}{n} \left( \gamma^2 \norm{c}_2^2 - \gamma \sigma \norm{c}_1^2 + \sigma ( n\sigma-\gamma) \norm{c}_1^2 \right) \\
    	& = 1 - \frac{2\mu}{n} \norm{c}_1 + \frac{\mu}{n} \left( (1+\alpha) \norm{c}_2^2 - \frac{1+\alpha-(1-\alpha(n-1))}{n} \norm{c}_1^2 \right) \\
    	& = 1 - \frac{2\mu}{n} \norm{c}_1 + \frac{\mu}{n} \left( (1+\alpha) \norm{c}_2^2 - \alpha \norm{c}_1^2 \right) \\
    	& = 1 - \frac{2\mu}{n} \norm{c}_1 + \frac{\mu}{n} \left( (1+\alpha) \frac{(1+\alpha)^{2n}-1}{\alpha(\alpha+2)} - \frac{(1+\alpha)^{2n}-2(1+\alpha)^n+1}{\alpha} \right) \\
    	& = 1 - \frac{\mu}{n} \left( \frac{(1+\alpha)^{2n}-1}{\alpha(\alpha+2)} \right),
\end{align*}
which concludes the proof of Proposition \ref{theo-subopt-rpcd}.

\end{document}